\newtheorem{theorem}{Theorem}[section]
\newtheorem{lemma}[theorem]{Lemma}
\newtheorem{proposition}[theorem]{Proposition}
\theoremstyle{definition}
\newtheorem{remark}[theorem]{Remark}
\theoremstyle{definition}
\newcommand{\dd}{{{d}}}
\newcommand{\ssl}[2]{\mathrm{SL}_{#1}(\mathbb{#2})}
\newcommand{\ggl}[2]{\mathrm{GL}_{#1}(\mathbb{#2})}
\newcommand{\C}{\mathbb{C}}
\newcommand{\reff}[1]{(\ref{#1})}
\newcommand{\bra}[1]{{\left(#1\right)}}
\DeclareMathOperator{\sign}{sgn}
\DeclareMathOperator{\adjoint}{Ad}
\definecolor{orange}{rgb}{1,0.5,0}
\newcommand{\nothing}[1]{}
\DeclareMathOperator{\GL}{GL}
\newcommand{\roots}{\Phi}
\newcommand{\rootp}{\Phi^+}
\newcommand{\weylchamber}{\mathcal{P}^{+}}
\DeclareMathOperator{\PGL}{PGL}
\DeclareMathOperator{\SU}{SU}
\DeclareMathOperator{\diag}{diag}
\DeclareMathOperator{\chara}{char}
\newcommand{\sumad}{\underset{n_1/n_2\in \bra{\mathbb{Q}^{\times}}^3 }{\sum_{n_1,n_2}}
\sum_{[n_1,n_2]|n|n_1n_2}}
\newcommand{\paren}[1]{\ensuremath{\left( #1 \right)}}
\newcommand{\abs}[1]{\ensuremath{\left| #1 \right|}}
\newcommand{\set}[1]{\ensuremath{\left\{ #1 \right\}}}
\newcommand{\wbar}[1]{\overline{#1}}
\newcommand{\spec}{\operatorname{spec}}
\renewcommand{\Re}{{\mathop{\mathgroup\symoperators Re}}}
\renewcommand{\Im}{{\mathop{\mathgroup\symoperators Im}}}
\begin{document}

\title{\scshape Plancherel distribution of Satake parameters of Maass cusp forms on $\GL_3$}

\author{Jack Buttcane\thanks{Supported by NSF grant DMS-1601919.}{ }  and Fan Zhou}

\maketitle
\begin{abstract}
We prove an equidistribution result for the Satake parameters of Maass cusp forms on $\GL_3$ with respect to the $p$-adic Plancherel measure by using an application of the Kuznetsov trace formula. The techniques developed in this paper deal with the removal of arithmetic weight $L(1,F,\adjoint)^{-1}$ in the Kuznetsov trace formula on $\GL_3$.
\\
\\
MSC: 11F25 (Primary), 11F72, 11F70  
\\
\\
\end{abstract}



%

\section{Introduction}
Hecke-Maass cusp forms on $\GL_N$ are cuspidal automorphic functions on ${\scriptstyle{\ssl N Z}}\setminus\GL_N(\mathbb{R})/{\scriptstyle{\bra{\operatorname{O}_N(\mathbb{R})\cdot\mathbb{R}^\times} }} $. They correspond to the spherical automorphic representations of $\PGL_N(\mathbb{A_Q})$. 
Their existence is confirmed by 
Selberg in \cite{selberg} for $N=2$,
Miller in \cite{miller} for $N=3$, M\"uller in \cite{mueller} and Lindenstrauss-Venkatesh in \cite{lindenstraussvenkatesh} for all $N\geq 2$.
They all use the Selberg trace formula or its variations and generalizations, such as the Arthur-Selberg trace formula and the pre-trace formula.
A recent typical but highly nontrivial application of the Authur-Selberg trace formula is to evaluate the distribution of the Satake parameter of Maass cusp forms by Matz-Templier in \cite{matztemplier}. It is proved that the Satake parameter at a prime $p$ of a Maass cusp form $F$ is equidistributed with respect to the $p$-adic Plancherel measure on the unitary dual of $\PGL_N(\mathbb{Q}_p)$, as $F$ varies over the cuspidal spectrum, \textit{op. cit}.

Another important tool to study automorphic forms  is the Kuznetsov trace formula (relative trace formula).  
Historically,  on $\GL_2$, the Kuznetsov trace formula is at least as successful as the Selberg trace formula, if not more. 
A recent application of the Kuznetsov trace formula on $\GL_3$ is the breaking of the convexity bound in \cite{blomerbuttcane1}  of $L$-function on $\GL_3$ in the spectral aspect, by Blomer and the first-named author.
Other applications of the Kuznetsov trace formula on $\GL_3$ include \cite{blomer}, \cite{goldfeldkontorovich}, \cite{BBR}, \cite{zhou2}, etc.

In this paper, we show the Plancherel-equidistribution of Satake parameters on $\GL_3$ as an application of the Kuznetsov trace formula, sharpened by \cite{blomerbuttcane1} and \cite{blomerbuttcane2}, combined with weight removal technique of Luo in \cite{luo1} and \cite{luo2}, as well as a formula for adjoint $L$-functions of degree $N^2-1$ for $\GL_N$, which was little known previously.


A crucial difference between the Kuznetsov trace formula and the Selberg-type formula is the appearance of arithmetic weight $L(1,F,\adjoint)^{-1}$ in the spectral side of the Kuznetsov trace formula.
The Selberg-type formula on $\GL_2$ looks like
$$\sum_f A_f(n)h(\nu_f)=\text{orbital integrals},$$
whereas the Kuznetsov trace formula is
$$\sum_f \frac{A_f(n)\overline{A_f(m)}}{L(1,\adjoint f)}h(\nu_f) =\delta_{m=n}\;\text{main term}+\text{Kloosterman sums},$$
where $\sum\limits_f$ is summing over the cuspidal spectrum of $\GL_2$ and 
$h$ is a test function on the spectral parameters $\nu_f$.
The weight removal technique of Luo in \cite{luo1} and \cite{luo2} is to remove $L(1,\adjoint f)^{-1}$ from the Kuznetsov trace formula.
Luo proves the distribution of $L(1,\adjoint f)$ for $f$ being Maass forms on $\GL_2$ and the Phillips-Sarnak theorem on the Weyl's law, both from the Kuznetsov trace formula on $\GL_2$. 
Such a method is also used in \cite{lauwang} to study the equidistribution of Hecke eigenvalues on $\GL_2$.
The weight removal technique of Luo depends on a precise formula of Shimura for $L(s,\adjoint f)$. Shimura has the famous formula 
for the adjoint $L$-function from $\GL_2$
from \cite{shimura}
$$L(s,\adjoint f)=\zeta(2s)\sum_{n=1}^\infty \frac{A_f(n^2)}{n^s}$$ 
for a Hecke-Maass cusp form (or a modular form) $f$ for $\ssl 2 Z$ with Hecke eigenvalues $A_f(n)$.
The $\GL_N$-analogue of this formula was not clear. Thanks to Joseph Hundley, we have the following formula for $\GL_3$
$$L(s,F,\adjoint)=\zeta(2s)\zeta(3s)\sumad \frac{A_F(n_1,n_2)}{n^s}$$
for a Maass cusp form $F$ for $\ssl 3 Z$ with Fourier coefficients $A_F(*,*)$. 
In the previous formula, $n_1/n_2\in \bra{\mathbb{Q}^\times}^3$ means $n_1/n_2$ is the cube of a rational number.
The formula above can be traced back to the Rankin-Selberg integral of Ginzburg in \cite{ginzburg}, and independently the representation-theoretic works of Kostant, Lusztig and Hesselink. A formula for $\GL_N$ can be found in Theorem \ref{thm:adjoint-lusztig} but it is less explicit.  It will turn out that the adjoint $L$-function has connection with the Kazhdan-Lusztig polynomial. 

Unlike some other applications of the Kuznetsov trace formula, the weight $L(1,F,\adjoint)^{-1}$ is crucial for the equidistribution problems of the Satake parameters. With the weight $L(1,F,\adjoint)^{-1}$, it is proved in \cite{BBR} and \cite{zhou2} that the Satake parameter of $F$ at a fixed prime $p$ is equidistributed with respect to the Sato-Tate measure, as $F$ varies over Maass cusp forms on $\GL_3$. In this paper, we are going to prove that without the weight, the equidistribution is with respect to another measure, the $p$-adic Plancherel measure $\mu_p$ (see \reff{eq:plancherelmeasure}). There is a lot of literature on the equidistribution problems on $\GL_2$, both with and without weight (see \cite[Section 1]{zhou2}). In some sense, our weight removal technique is not to throw away the weight but to absorb it.

The Kuznetsov trace formula on $\GL_3$ used in this paper is a significant improvement over those in \cite{blomer} and \cite{goldfeldkontorovich}. It is comparable to the convexity-breaking paper \cite{blomerbuttcane1}, although the latter is for a different purpose.  The crucial improvements in that paper are the integral formulae \cite[Section 5]{blomerbuttcane1}, following the harmonic analysis of the integral transforms in \cite{buttcane}.  Careful study of these integral formulae yields a sharp cut-off point on the geometric side the Kuznetsov formula.  In this paper, we provide even stronger integral representations \eqref{eq:LongEleIntRepn} and \eqref{eq:w4IntRepn}, easily giving the sharp upper bounds needed for the current work; these integral representations hold largely independent of the particular family of test functions used here.  We expect that such formulae will lead to asymptotic expansions and thereby solutions to previously intractable problems such as removing the maximal Eisenstein contribution (see the discussion after Proposition 3 of \cite{blomerbuttcane2}).

Our method also relies on the functional equation and the analytic properties of $L(s,F,\adjoint)$. For a general automorphic representation $\pi$ of $\GL_3(\mathbb{A}_K)$ and a number field $K$, it is proved by Ginzburg that $L(s,\pi,\adjoint)$ has functional equation and analytic continuation (except finitely many possible but unlikely poles) in \cite{ginzburg}.  For a Maass cusp form $F$ for $\ssl 3 Z$, with the recent work \cite{hundley2} of Hundley, we are able to prove that $L(s,F,\adjoint)$ and its completed $L$-function are holomorphic.

Our work on the Kuznetsov trace formula could have been saved by a good bound toward the Lindel\"of hypothesis of $L(s,F,\adjoint)$ on average over the spectrum (see Lemma \ref{lemma:approximate_dirichlet}). Indeed, the Lindel\"of hypothesis is known on average for a few families of $L$-functions, such as that in \cite{luo1}. However, it is not available here. We compensate that with our improvement in the Kuznetsov trace formula.

\subsection{Equidistribution with and without weight \texorpdfstring{$L(1,F,\adjoint)^{-1}$}{1/L(1,F,Ad)}}
The equidistribution of Hecke eigenvalues of automorphic forms on $\GL_2$ has been studied extensively.
Let $f$ be a cuspidal automorphic form on $\GL_2$. For a majority of $f$, the Sato-Tate conjecture predicts $A_f(p)$ is equidistributed with respect to the Sato-Tate measure, as $p$ varies over all primes. 
Big progress has been made by Harris, Taylor, et al. for modular forms.
From a spectral perspective, it is proved by Sarnak in \cite{sarnak},
Conrey-Duke-Farmer in  \cite{conreydukefarmer}, Serre in \cite{serre} that $A_f(p)$ is equidistributed with respect to the $p$-adic Plancherel measure, as $f$ varies over a family. 
By the Kuznetsov-Bruggeman trace formula, Bruggeman proves that if each $A_f(p)$ is given a weight $L(1,\adjoint f)^{-1}$, then the equidistribution of $A_f(p)$ is changed to the Sato-Tate measure  again (see \cite{bruggeman}).
Later works on the equidistribution problems on  $\GL_2$ are so numerous that we do not include any here.

On $\GL_3$, Bruggeman's analogue is proved in \cite{BBR}, \cite{zhou1}, \cite{zhou2}, by using the Kuznetsov trace formula on $\GL_3$ of \cite{buttcane_ramanujan}, \cite{blomer} and \cite{goldfeldkontorovich}. 

\subsection{Hecke-Maass cusp forms on \texorpdfstring{$\GL_N$}{GLN}}
The dual group of $\PGL_N(\mathbb{Q}_p)$ is $\ssl N C$.
The standard maximal torus of $\ssl N C$ is 
$$\mathsf{T}=\left\{\diag\{\alpha_1, \cdots, \alpha_N\}:\alpha_i \in \C^* \text{ for all }i, \prod_{i=1}^N \alpha_i=1 \right\}\subset \ssl N C$$
The group $\SU_N$ is the standard maximal compact subgroup of $\ssl N C$. The standard maximal torus of $\SU_N$ is 
$$\mathsf{T}_0=\left\{\diag\{\alpha_1, \cdots, \alpha_N\}:\alpha_i \in \C^* \text{ and } |\alpha_i|=1 \text{ for all }i, \prod_{i=1}^N \alpha_i=1 \right\}\subset \ssl N C$$
The Weyl group $W$ ($\cong \mathrm{S}_N$) acts on $\mathsf{T}$ and $\mathsf{T}_0$  by permutating the diagonal entries. 

Let $F$ be a Hecke-Maass cusp form for $\ssl N Z$ with Fourier coefficients $A_F(m_1,\cdots,m_{N-1} )\in \mathbb C$ for $(m_1,\cdots,m_{N-1} )\in \mathbb{N}^{N-1}$, as defined in \cite{goldfeld}. 
Let $\diag\{\alpha_F ^\bra{1}(p),
\cdots, \alpha_F ^\bra{N}(p)
 \}\in \mathsf{T}/W$ 
  be the Satake parameter of $F$ at a prime $p$. 
We have the Shintani formula
$$A_F(p^{m_1},\cdots, p^{m_{N-1}})=S_{m_1,\cdots,m_{N-1}}\bra{\alpha_F ^\bra{1}(p),
\cdots, \alpha_F ^\bra{N}(p)},$$
where $S_{m_1,\cdots,m_{N-1}}(x_1,\cdots,x_N)$ is the Schur polynomial (see \cite[p.233]{goldfeld}).
The $L$-factor of $F$ at a prime $p$ is given by 
$$L_p(s,F):=\prod_{i=1}^N \bra{1-\frac{\alpha_F^\bra{i}(p)}{p^s}}^{-1}$$
and the standard $L$-function of $F$ has the Euler product 
\begin{align*}
L(s,F):&=\sum_{n=1}^\infty \frac{A_F(1,\cdots,1,n)}{n^s}\\
&=\prod_{p\text{ is a prime}}L_p(s,F).
\end{align*}
The generalized Ramanujan-Petersson conjecture predicts
$$\diag\{\alpha_F ^\bra{1}(p),
\cdots, \alpha_F ^\bra{N}(p)
 \}\in \mathsf{T}_0/W,$$
 namely, $\left|\alpha_F ^\bra{i}(p)\right|=1$ for all $i$.

\subsection{Hecke-Maass cusp form for $\ssl 3 Z$ and a family of test functions}
\label{FWOrtho}

Let $(\nu_1,\nu_2)\in \mathbb{C}^2$ and define $\nu_3:=-\nu_1-\nu_2$.
It will be convenient to also use coordinates $\mu = (\mu_1,\mu_2,\mu_3)\in \C^3$, $\mu_1+\mu_2+\mu_3=0$ given by
\begin{equation}\label{eq:mu}
	\mu_1=\nu_1-\nu_3, \qquad \mu_2=\nu_2-\nu_1, \qquad \mu_3=\nu_3-\nu_2.
\end{equation}
Define $\mathfrak{a}^*=\{(\mu_1,\mu_2,\mu_3)\in \mathbb{R}^3|\mu_1+\mu_2+\mu_3=0\}$ 
and 
$\mathfrak{a}^*_\mathbb{C}=\{(\mu_1,\mu_2,\mu_3)\in \mathbb{C}^3|\mu_1+\mu_2+\mu_3=0\}$. The Weyl group $W$ ($\cong \mathrm{S}_3$) acts on $\mathfrak{a}^*$ and $\mathfrak{a}^*_\mathbb{C}$ by permutation of coordinates. Identify bijectively $\nu=(\nu_1,\nu_2)\in \mathbb{C}^2$ with $\mu\in\mathfrak{a}^*_\mathbb{C}$ as in \eqref{eq:mu} and we have $\mathbb{C}^2
\cong\mathfrak{a}^*_\mathbb{C}
$.

Let $F$ be a Hecke-Maass cusp form for $\ssl 3 Z$ of type $\nu_F=(\nu_1(F),\nu_2(F))\in \mathbb{C}^2$ (also $\in \mathfrak{a}^*_\mathbb{C}$). 
Let $(\mu_1(F),\mu_2(F),\mu_3(F))\in \mathfrak{a}^*_\mathbb{C}/W$ be the Langlands parameter of $F$, satisfying \eqref{eq:mu}, 
with the properties $$\mu_1(F)+\mu_2(F)+\mu_3(F)=0$$ and 
\begin{equation}\label{unitary}
\{-\mu_1(F),-\mu_2(F),-\mu_3(F)\}=\{\overline{\mu_1(F)},\overline{\mu_2(F)},\overline{\mu_3(F)}\}.
\end{equation}
 The functional equation for the standard $L$-function $L(s,F)$ is 
$$L(s,F)\prod_{j=1}^3 \Gamma_\mathbb{R}(s-\mu_j(F))=L(1-s,\tilde F)\prod_{j=1}^3 \Gamma_\mathbb{R}(1-s+\mu_j(F)).$$ 



Let $\Omega \subseteq i\mathfrak{a}^{\ast}$ be a compact Weyl-group invariant subset disjoint from the Weyl chamber walls $\{\mu\in \mathfrak{a}^*_\mathbb{C}|w(\mu)=\mu \text{ for some }w\in W \text{ and }w\neq 1\}$ and $T > 1$ a large parameter.
We utilize the test function of \cite[Section 5.2]{blomerbuttcane2}, which approximates the characteristic function on $T\Omega$:
Let $\nu_0 \in \Omega$.
For $\nu\in  \mathfrak{a}^*_{\mathbb C}$, we put $\psi(\nu) = \exp\bra{3\bra{\nu_1^2 +\nu_2^2  + \nu_3^2}}$ and
\[ P(\nu) :=  \prod_{0 \leq n \leq A} \prod_{j=1}^3 \frac{(\nu_j)^2 - \frac{1}{9}(1 + 2n)^2}{T^2} \]
for some large, fixed constant $A$ to compensate poles of the spectral measure in a large tube.  
Now we choose
\begin{equation}\label{eq:def_h_T}
h_T(\nu) :=  P(\nu)^2 \Bigl(\sum_{w \in W}\psi\Bigl(\frac{w(\nu)  -  T\nu_0}{T^{1-\varepsilon}}\Bigr)\Bigr)^2
\end{equation}
for some very small $0 < \varepsilon < 1/2$.
Then $T^\varepsilon$ such functions give a majorant of the characteristic function of $T\Omega$.

 \begin{theorem}\label{thm:main_thm}
 Let $\mu_p$ be the $p$-adic Plancherel measure supported on $\mathsf{T}_0/W$, defined in \eqref{eq:plancherelmeasure}.
Let $h_T$ for $T>1$ be the family of test functions on the spectral parameters of Hecke-Maass cusp forms on $\GL_3$, defined in \eqref{eq:def_h_T}.
For any continuous function $\phi$ on $\mathsf{T}/W$, we have the limit 
$$\lim_{T\to \infty} \frac{ \sum\limits_F\phi\bra{\diag\{\alpha_F ^\bra{1}(p),
\alpha_F ^\bra{2}(p), \alpha_F ^\bra{3}(p)
 \}} \; h_T(\nu_F) }{\sum\limits_F h_T(\nu_F) }= \int_{\mathsf{T}_0/W} \phi\;\dd \mu_p, $$
where $\sum\limits_F$ is a summation over all Hecke-Maass cusp forms for $\ssl 3 Z$ and $\nu_F$ is the spectral parameter of $F$.
\end{theorem}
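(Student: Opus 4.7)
The plan is to reduce the statement to a Weyl-type equidistribution criterion, then evaluate the resulting weighted sums via the Kuznetsov trace formula after Luo-style weight removal using the adjoint $L$-function formula attributed to Hundley.

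First I would invoke a density argument. The Schur polynomials $S_{m_1,m_2}(x_1,x_2,x_3)$ restricted to $\mathsf{T}_0/W$ span a uniformly dense subalgebra of $C(\mathsf{T}_0/W)$ (they are, up to a sign twist, the characters of irreducible representations of $\SU_3$). By the Shintani formula, $S_{m_1,m_2}(\alpha_F(p)) = A_F(p^{m_1},p^{m_2})$. Thus it suffices to prove, for every fixed pair $(m_1,m_2)\in \mathbb{N}^2$,
\[
\lim_{T\to\infty} \frac{\sum_F A_F(p^{m_1},p^{m_2})\, h_T(\nu_F)}{\sum_F h_T(\nu_F)}\;=\;\int_{\mathsf{T}_0/W} S_{m_1,m_2}\,\dd\mu_p,
\]
and the target integral can be computed explicitly via the Macdonald/Plancherel formula for $\PGL_3(\mathbb{Q}_p)$.

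The main step is to compute the numerator. Write
\[
A_F(p^{m_1},p^{m_2}) = \frac{A_F(p^{m_1},p^{m_2})\,L(1,F,\adjoint)}{L(1,F,\adjoint)},
\]
and expand $L(1,F,\adjoint)$ via an approximate functional equation (this is where Lemma \ref{lemma:approximate_dirichlet} enters), using the Hundley formula
\[
L(s,F,\adjoint) = \zeta(2s)\zeta(3s)\sumad \frac{A_F(n_1,n_2)}{n^s},
\]
together with the holomorphy of the completed adjoint $L$-function and its functional equation. The approximate functional equation replaces $L(1,F,\adjoint)$ by a sum of length essentially $T^{A}$ for some fixed $A$ (a power of the analytic conductor, which is polynomial in $T$). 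This reduces the numerator to
\[
\sum_{n_1,n_2,n} \frac{\text{weights}(n_1,n_2,n)}{n}\sum_F \frac{A_F(p^{m_1},p^{m_2})\,A_F(n_1,n_2)}{L(1,F,\adjoint)}\,h_T(\nu_F),
\]
plus a negligible error from the tails.

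To the inner sum I apply the $\GL_3$ Kuznetsov trace formula sharpened in \cite{blomerbuttcane1,blomerbuttcane2}. Its diagonal (identity) contribution produces the Plancherel volume $\int h_T(\nu)\,\dd\mu_\infty(\nu)$, and once combined with the Dirichlet-series expansion of $L(1,F,\adjoint)$ and the Hecke-multiplicativity $A_F(p^{m_1},p^{m_2})A_F(n_1,n_2)=\sum A_F(\cdots)$, the diagonal collapses to precisely the $p$-local factor $\int_{\mathsf{T}_0/W} S_{m_1,m_2}\,\dd\mu_p$ times the archimedean Plancherel mass, via a local calculation at $p$ analogous to Bruggeman's Sato-Tate case but with the opposite normalization (Plancherel instead of Sato-Tate), reflecting precisely the absorption of the $L(1,F,\adjoint)^{-1}$ weight. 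The denominator $\sum_F h_T(\nu_F)$ is handled by the same procedure with $(m_1,m_2)=(0,0)$, yielding the archimedean mass alone; the ratio therefore gives the claimed integral.

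The hard part will be controlling the off-diagonal terms: the Kloosterman (Weyl element $w_4$ and long-element $w_5$) contributions coming from the Kuznetsov formula, summed against the $p^{m_1+m_2}\cdot n_1 n_2$-scale of the Hecke moduli and against the truncated Dirichlet series for $L(1,F,\adjoint)$, as well as the minimal and maximal Eisenstein contributions. Here the key tool is the new integral representations \eqref{eq:LongEleIntRepn} and \eqref{eq:w4IntRepn}, which yield the sharp cut-off on the geometric side independent of the specific test family; combined with Weil-type bounds for the $\GL_3$ Kloosterman sums, they save a power of $T$ over the main term provided the Dirichlet-series truncation length is kept below a suitable threshold. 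The Eisenstein spectrum is handled by standard zero-free region bounds on $L(s,F,\adjoint)$ and on the Rankin-Selberg $L$-functions appearing in the residual spectrum, using Hundley's holomorphy result to avoid hypothetical poles. Once these pieces are in place, sending $T\to\infty$ and then letting the $\varepsilon$ in the definition of $h_T$ tend to $0$ yields the theorem.
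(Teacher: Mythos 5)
Your proposal follows essentially the same route as the paper's proof: reduce the theorem to asymptotics for $\sum_F A_F(p^{l_2},p^{l_1})\,h_T(\nu_F)$ via Stone--Weierstrass/Peter--Weyl and the Kato/Macdonald identity $\int_{\mathsf{T}_0/W}\chara(V_\beta)\,\dd\mu_p=\mathfrak{M}^0_\beta(p^{-1})$; remove the weight $L(1,F,\adjoint)^{-1}$ by a Luo-style truncated Dirichlet series using the Hundley/Ginzburg adjoint formula and the holomorphy/functional equation of $\Lambda(s,F,\adjoint)$; apply the sharpened $\GL_3$ Kuznetsov formula; and balance errors. Two small inaccuracies worth noting: the paper's Kuznetsov formula (Theorem \ref{thm:ValThm5}) already carries two free Hecke indices and a $\delta_{n=m}$ diagonal, so no Hecke-multiplicativity expansion of $A_F(p^{m_1},p^{m_2})A_F(n_1,n_2)$ is needed --- the diagonal just picks out the single term $(n_1,n_2)=(p^{l_2},p^{l_1})$ in the truncated Dirichlet series; and the Eisenstein contribution is disposed of via the Kim--Sarnak bound $\theta=7/64<1/6$ (as in the appendix to \cite{blomer}), not via zero-free regions, nor is any $\varepsilon\to 0$ limit required since $\varepsilon<1/2$ already guarantees power saving in the error of Theorem \ref{thm:mainthm_prep}.
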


\section{Representation theory and adjoint $L$-function}

\subsection{Kazhdan-Lusztig polynomial}
Let $G=\ssl N C$ be a semi-simple algebraic group over $\mathbb C$ with Lie algebra $\mathfrak g=\mathfrak{sl}_N(\mathbb{C})$.
Let $\roots$ be the root system and $\rootp$ the set of positive roots. 
Let $W$ be its Weyl group.
Let $\rho:=\frac 1 2 \sum_{\alpha\in \rootp}\alpha$ be the half sum of positive roots.
Let $\weylchamber$ be the (positive) Weyl chamber of dominant weights.

 Let $q$ be a symbol. 
For a weight $\beta$ define the Kostant $q$ partition by $$P_q (\beta )= \sum\limits_{\substack{\beta=\sum n(\alpha)\alpha\\\alpha\in \rootp, n(\alpha)\geq 0}} q^{\sum n(\alpha)}.$$
Define Lusztig's $q$ polynomial $$\mathfrak{M}^\beta_\lambda (q)= \sum_{w\in W}(-1)^{\text{length}(w)} P_q (w(\lambda+\rho)-(\beta+\rho)).$$
This is also called the Kostka-Foulkes polynomial or the Kazhdan-Lusztig polynomial (for root systems of $A$ type). Such polynomials were studied extensively but we are only interested in $\mathfrak{M}^0_\lambda(q)$ in this work.

\subsection{Hilbert-Poincar\'e series}
Define $V_\lambda$ as the finite-dimensional complex representation of $G$ for the highest weight $\lambda\in \weylchamber$. By the Weyl character formula, we have 
$$\chara(V_\lambda)=\frac{\sum_{w\in W}(-1)^{\text{length}(w)} e^{w(\rho+\lambda)}}{
e^{\rho}\prod_{\beta\in\rootp }\bra{1-e^{-\beta}}}.$$
Recall $\mathfrak{g}=\mathfrak{sl}_N(\mathbb{C})$ and $G$ acts on $\mathfrak{g}$ as the adjoint representation of dimension $N^2-1$.
The symmetric algebra $S(\mathfrak g)=\oplus_{n=0}^\infty \vee^n \mathfrak{g}$ becomes a graded representation of $G$. A fundamental paper of Kostant \cite{kostant} 
proves $S(\mathfrak g)=I\otimes H$ where $G$-invariant part $I$ is a free module (generated by known degrees) and $H=\oplus_{n=0}^\infty H^n$
is the graded module of harmonic polynomials. 
Define $F(V) = \sum_{n=0}^\infty\dim \operatorname{Hom}_\mathfrak{g}(V, H^n)q^n$, as in \cite[\S 2]{kirillov}.
It is proved in  \cite{hesselink} 
$$\sum_{n=0}^\infty \dim \operatorname{Hom}_\mathfrak{g}(V_\lambda, H^n)q^n =  \mathfrak{M}^0_\lambda(q).$$
Hence we have
$$\sum_{n=0}^\infty \chara(H^n) q^n = \sum_{\lambda\in \weylchamber} \mathfrak{M}^0_\lambda(q)\chara (V_\lambda).$$
Combining it with $S(\mathfrak{g})=I\otimes H$, we have for $S(\mathfrak{g})=\oplus_{n=0}^\infty \vee^n \mathfrak{g}$
\begin{equation}\label{eq:poincare_series_adjoint}
\sum_{n=0}^\infty\chara \bra{\vee^n \mathfrak{g} }q^n= 
\bra{\prod_{l=2}^N \bra{1 - q^l}^{-1} }\sum_{\lambda\in \weylchamber} \mathfrak{M}^0_\lambda(q)\chara (V_\lambda).\end{equation}

\subsection{Plancherel measure}
Let $p$ be a prime number. 
Let $\dd s$ be the normalized Haar measure on $\mathsf{T}_0$.
The Sato-Tate measure is defined on $\mathsf{T}_0/W$ as 
$$\dd \mu_\infty = \frac{1}{|W|}\prod_{\beta\in \roots} \bra{1-e^\beta(s)} \dd s $$
Let $\mu_p$ be the unramified Plancherel measure of $\ssl N C$, which is supported on $\mathsf{T}_0/W$, and it is defined as 
\begin{equation}\label{eq:plancherelmeasure}
\dd \mu_p =  \frac{\mathsf W(p^{-1})}{\prod\limits_{\beta \in \roots} (1-p^{-1}e^\beta (s))}\dd \mu_\infty
\end{equation}
with $$\mathsf W(q):=\sum_{w\in W} q^{\textrm{length}(w)}.$$
The formula for $\mu_p$ is due to Macdonald in \cite{macdonald}. 

\begin{proposition}[{\cite[(3.4)]{kato}}]
For $\beta \in \weylchamber$ we have 
$$ \int_{\mathsf{T}_0/W} \chara(V_\beta) \dd \mu_p = \mathfrak{M}^0_\beta (p^{-1}) .$$
\end{proposition}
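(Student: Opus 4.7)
The plan is to follow the Kato--Macdonald approach: unfold the measure definitions, reduce the integral over $\mathsf{T}_0/W$ to an integral of Laurent polynomials against normalized Haar on $\mathsf{T}_0$, and then extract the constant Fourier coefficient by orthogonality. First, substituting the definition of $d\mu_p$ from \eqref{eq:plancherelmeasure} together with the Weyl integration identity $d\mu_\infty = |W|^{-1}\prod_{\alpha\in\Phi}(1-e^\alpha)\,ds$ on $\mathsf{T}_0$ (where $ds$ is normalized Haar), the integral becomes
$$\int_{\mathsf{T}_0/W}\chara(V_\lambda)\,d\mu_p = \frac{\mathsf{W}(p^{-1})}{|W|}\int_{\mathsf{T}_0}\chara(V_\lambda)(s)\,\frac{\prod_{\alpha\in\Phi}(1-e^\alpha(s))}{\prod_{\alpha\in\Phi}(1-p^{-1}e^\alpha(s))}\,ds.$$

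Next, I would apply the Weyl character formula $\chara(V_\lambda)\cdot\mathcal{A}_\rho=\mathcal{A}_{\rho+\lambda}$, where $\mathcal{A}_\mu:=\sum_{w\in W}(-1)^{\textrm{length}(w)}e^{w\mu}$, together with the denominator identity $\prod_{\alpha\in\Phi}(1-e^\alpha)=(-1)^{|\rootp|}\mathcal{A}_\rho^{\,2}$, to rewrite the numerator as $(-1)^{|\rootp|}\mathcal{A}_\rho\mathcal{A}_{\rho+\lambda}$.  Since the remaining factor $\prod_\alpha(1-p^{-1}e^\alpha)$ is $W$-invariant, expanding $\mathcal{A}_{\rho+\lambda}=\sum_w(-1)^{\textrm{length}(w)}e^{w(\rho+\lambda)}$ term-by-term and applying the Haar-invariant change of variable $s\mapsto w^{-1}s$ to each summand collapses the sum:  the antisymmetry of $\mathcal{A}_\rho$ contributes a compensating sign $(-1)^{\textrm{length}(w)}$, so every summand produces the same integral, and all $|W|$ copies coincide.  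One arrives at
$$\int_{\mathsf{T}_0/W}\chara(V_\lambda)\,d\mu_p = (-1)^{|\rootp|}\mathsf{W}(p^{-1})\int_{\mathsf{T}_0}\frac{\mathcal{A}_\rho(s)\,e^{\rho+\lambda}(s)}{\prod_{\alpha\in\Phi}(1-p^{-1}e^\alpha(s))}\,ds.$$

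The final step is to evaluate this integral via formal Laurent-series expansion: $\mathcal{A}_\rho=\sum_u(-1)^{\textrm{length}(u)}e^{u\rho}$, and, by the Kostant $q$-partition function,
$$\prod_{\alpha\in\rootp}(1-p^{-1}e^{\pm\alpha})^{-1}=\sum_{\beta}P_{p^{-1}}(\beta)e^{\pm\beta}.$$
Character orthogonality $\int_{\mathsf{T}_0}e^\gamma\,ds=\delta_{\gamma=0}$ selects the constant Fourier coefficient and yields a doubled partition sum with one factor for positive and one for negative root-lattice directions.  To consolidate this into the single-sum definition of $\mathfrak{M}^0_\lambda$, I would invoke Macdonald's symmetrization identity
$$\sum_{u\in W}u\Bigl(\prod_{\alpha\in\rootp}\tfrac{1-qe^{-\alpha}}{1-e^{-\alpha}}\Bigr)=\mathsf{W}(q),$$
applied with $q=p^{-1}$; this cancels the $\mathsf{W}(p^{-1})$ prefactor against one of the Weyl-group sums and reduces the result precisely to $\sum_{w\in W}(-1)^{\textrm{length}(w)}P_{p^{-1}}(w(\rho+\lambda)-\rho)=\mathfrak{M}^0_\lambda(p^{-1})$.

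The main obstacle is in this last step: arranging Macdonald's identity so that it collapses the doubled partition convolution cleanly, with signs and $|\rootp|$-parity tracked correctly.  A more transparent alternative I might pursue is to first prove the auxiliary coefficient-extraction identity
$$\mathfrak{M}^0_\lambda(q)=\int_{\mathsf{T}_0}\chara(V_\lambda)(s)\prod_{\alpha\in\rootp}\frac{1-e^{-\alpha}(s)}{1-qe^{-\alpha}(s)}\,ds$$
directly from the Weyl character formula and the coefficient definition of $P_q$, and then use Macdonald's identity to Weyl-symmetrize the non-invariant kernel $\prod_{\alpha\in\rootp}(1-e^{-\alpha})/(1-qe^{-\alpha})$ into the full denominator $\prod_\alpha(1-qe^\alpha)^{-1}$, matching against the integral above without ever introducing a doubled sum.
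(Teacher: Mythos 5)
The paper does not prove this proposition; it is stated as a citation to Kato (equation (3.4) there), so there is no internal proof to compare against. Your proposal reconstructs what is essentially the Kato--Macdonald argument, and after checking the details I believe it is correct. A few remarks on the two routes you sketch.

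The primary route (unfold $d\mu_p$, use the Weyl character/denominator identities and the sign computation under $s\mapsto ws$ to collapse the outer Weyl sum, expand the full denominator $\prod_{\alpha\in\Phi}(1-p^{-1}e^{\alpha})^{-1}$ into a product of two Kostant $q$-series, apply orthogonality) is sound up to the point you yourself flag: the resulting double partition-function sum, indexed by pairs $(\beta,\gamma)$ with $u\rho+\rho+\lambda+\beta-\gamma=0$, does not visibly have a second Weyl sum to cancel against $\mathsf{W}(p^{-1})$, so the "collapse the doubled convolution" step requires a genuine argument, not just an invocation of Macdonald. You are right to be uneasy there.

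Your alternative route is cleaner and complete. The auxiliary identity
$$\mathfrak{M}^0_\lambda(q)=\int_{\mathsf{T}_0}\chara(V_\lambda)(s)\prod_{\alpha\in\rootp}\frac{1-e^{-\alpha}(s)}{1-qe^{-\alpha}(s)}\,ds$$
is immediate: multiplying $\chara(V_\lambda)$ by $\prod_{\alpha\in\rootp}(1-e^{-\alpha}) = e^{-\rho}\mathcal{A}_\rho$ gives $e^{-\rho}\mathcal{A}_{\rho+\lambda}$, expanding $\prod_{\alpha\in\rootp}(1-qe^{-\alpha})^{-1}=\sum_\beta P_q(\beta)e^{-\beta}$ and using $\int_{\mathsf{T}_0}e^\gamma\,ds=\delta_{\gamma=0}$ picks out $\beta=w(\rho+\lambda)-\rho$ for each $w$, giving exactly $\sum_w(-1)^{\mathrm{length}(w)}P_q(w(\rho+\lambda)-\rho)$. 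Then, since $\chara(V_\lambda)$ is $W$-invariant, one may replace the kernel by its Weyl average; the form of Macdonald's identity you actually need here is
$$\sum_{w\in W}w\!\left(\prod_{\alpha\in\rootp}\frac{1-e^{-\alpha}}{1-qe^{-\alpha}}\right)=\mathsf{W}(q)\,\frac{\prod_{\alpha\in\Phi}(1-e^{\alpha})}{\prod_{\alpha\in\Phi}(1-qe^{\alpha})},$$
which is equivalent to the form you quoted after dividing by the $W$-invariant function $\prod_{\alpha\in\Phi}(1-e^\alpha)/\prod_{\alpha\in\Phi}(1-qe^\alpha)$. Substituting $q=p^{-1}$ and comparing with \eqref{eq:plancherelmeasure} gives the proposition directly, with no doubled sum and no parity bookkeeping. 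I would present this alternative as the main proof and drop the first route, but as written the proposal correctly identifies all the needed ingredients.
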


\subsection{Adjoint $L$-function on \texorpdfstring{$\GL_N$}{GLN}}
Let the Weyl chamber $\weylchamber $ be parameterized by $\mathbb{Z}_{\geq 0}^{N-1}$.
Let $\lambda_1$ be the highest weight in $\weylchamber$ for the standard inclusion 
$\ssl N C \hookrightarrow \ggl N C$ (the first minuscule representation).
For $i=2,\cdots, N-1$, let $\lambda_i$ be the highest weight in $\weylchamber$ for the exterior power representation $\wedge^i V_{\lambda_1}$. 
Define a bijective map  $\aleph: \mathbb{Z}_{\geq 0}^{N-1} \to \weylchamber$
by  $$(l_{N-1},\cdots, l_1) \mapsto \sum_{i=1}^{N-1} l_i\lambda_i.$$

Let $F$ be a Hecke-Maass cusp form for $\ssl N Z$. 
Define  $$L(s,F,\adjoint):= \frac{L(s, F\times \tilde F)}{\zeta(s)} $$
as the adjoint $L$-function of $F$. It is a Dirichlet series with Euler product of degree $N^2-1$. The functional equation and holomorphy of the adjoint $L$-function is studied 
in \cite{shimura} for $N=2$, in \cite{ginzburg} for $N=3$, in \cite{bumpginzburg} for $N=4$, and in \cite{ginzburghundley} for $N=5$.
\begin{theorem}\label{thm:adjoint-lusztig}
Let $F$ be a Hecke-Maass cusp form for $\ssl N Z$.
We have the local $L$-function at $p$ 
for $\Re(s)>1$
$$L_p(s,F,\adjoint)=\bra{\prod_{l=2}^N\zeta_p(ls)} \bra{ \sum_{l_1=0}^\infty\cdots
\sum_{l_{N-1}=0}^\infty
 \mathfrak{M}^0_{\aleph(l_{N-1},\cdots,l_1)}(p^{-s}) A_F(p^{l_{N-1}},\cdots, p^{l_1}) }$$
\end{theorem}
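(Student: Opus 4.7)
The plan is to re-express $L_p(s,F,\adjoint)$ representation-theoretically as a generating series for characters of symmetric powers of the adjoint representation, and then to substitute the Hilbert--Poincar\'e identity \eqref{eq:poincare_series_adjoint} developed above together with the Shintani formula.

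First, write $\alpha=\diag(\alpha_F^{(1)}(p),\ldots,\alpha_F^{(N)}(p))$ for the Satake parameter at $p$. The Rankin--Selberg local factor has the Lie-theoretic expression
\[
L_p(s,F\times\tilde F)=\det\bigl(1-p^{-s}\,\alpha\otimes\alpha^{-1}\bigr)^{-1},
\]
where $\alpha\otimes\alpha^{-1}$ acts on $\mathfrak{gl}_N=\mathfrak{sl}_N\oplus\mathbb{C}$. The trivial summand contributes precisely the factor $(1-p^{-s})^{-1}=\zeta_p(s)$, so dividing by $\zeta_p(s)$ yields
\[
L_p(s,F,\adjoint)=\det\bigl(1-p^{-s}\adjoint(\alpha)\bigr)^{-1}=\sum_{n=0}^\infty p^{-ns}\,\chara(\vee^n\mathfrak{g})(\alpha),
\]
using the standard identity $\det(1-qT)^{-1}=\sum_n q^n\chara(\vee^n T)$ for a finite-dimensional operator $T$.

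Next, I would evaluate both sides of \eqref{eq:poincare_series_adjoint} at $q=p^{-s}$ and at $\alpha$, noting $(1-p^{-ls})^{-1}=\zeta_p(ls)$, to obtain
\[
L_p(s,F,\adjoint)=\Bigl(\prod_{l=2}^N\zeta_p(ls)\Bigr)\sum_{\lambda\in\weylchamber}\mathfrak{M}_\lambda^0(p^{-s})\,\chara(V_\lambda)(\alpha).
\]
Parametrizing $\weylchamber$ via the bijection $\aleph$, the irreducible character $\chara(V_{\aleph(l_{N-1},\ldots,l_1)})(\alpha)$ is exactly the Schur polynomial $S_{l_{N-1},\ldots,l_1}(\alpha_F^{(1)}(p),\ldots,\alpha_F^{(N)}(p))$, because the fundamental weights $\lambda_i$ are by definition the highest weights of the exterior power representations $\wedge^i V_{\lambda_1}$. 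The Shintani formula then identifies this Schur polynomial with $A_F(p^{l_{N-1}},\ldots,p^{l_1})$, and the claimed identity follows.

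The main obstacle will be justifying the termwise manipulations of the infinite sums. The cleanest route is to regard \eqref{eq:poincare_series_adjoint} as an equality of formal power series in $q$ with coefficients in the character ring of $G$: evaluation at the Satake parameter $\alpha$ is well-defined termwise, and the resulting formal Dirichlet series in $p^{-s}$ matches the Euler expansion of $L_p(s,F,\adjoint)$ coefficient by coefficient. Analytic convergence in $\Re(s)>1$ can then be recovered \emph{a posteriori} from the convergence of the defining Euler product of $L_p(s,F\times\tilde F)$, together with the nonnegativity of the coefficients of $\mathfrak{M}_\lambda^0(q)$ and the Jacquet--Shalika bound on the Satake parameters, which controls the polynomial growth of $\chara(V_\lambda)(\alpha)$ in $\lambda$. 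This reduces the analytic content of the theorem entirely to the purely combinatorial identity supplied by Kostant--Hesselink.
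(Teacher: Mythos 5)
Your proof is correct and follows essentially the same route as the paper's, which is a one-line citation of the Hilbert--Poincar\'e identity \eqref{eq:poincare_series_adjoint} together with the Casselman--Shalika formula (the latter being, for $\GL_N$, the same statement as the Shintani formula you invoke). You have simply unfolded that one-liner: identifying $L_p(s,F,\adjoint)$ with $\det\bigl(1-p^{-s}\adjoint(\alpha)\bigr)^{-1}=\sum_n p^{-ns}\chara(\vee^n\mathfrak{g})(\alpha)$, specializing \eqref{eq:poincare_series_adjoint} at $q=p^{-s}$, and converting $\chara(V_\lambda)(\alpha)$ to Hecke eigenvalues via Shintani/Casselman--Shalika is exactly the intended argument, and your remark on formal power series and a posteriori convergence is a reasonable way to make it rigorous.
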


\begin{proof}
The theorem follows from \eqref{eq:poincare_series_adjoint}, with the Casselman-Shalika formula for $A_F(p^{l_{N-1}},\cdots, p^{l_1}) $.
\end{proof}

\begin{lemma}\label{lem:speical_case_KL}
For the special case of $N=3$, we have
$$\mathfrak{M}^0_{\aleph(l_2,l_1)}(q)=\begin{cases}
\sum\limits_{i=max\{l_1,l_2\}}^{l_1+l_2} q^i
&,\quad \text{ if } 3|l_1-l_2,
\\
\quad \quad \quad 0&, \quad\text{ otherwise.}
\end{cases}
$$
\end{lemma}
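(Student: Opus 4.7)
The plan is to evaluate the six-term Weyl sum in the definition of $\mathfrak{M}^0_{\aleph(l_2,l_1)}(q)$ directly for the rank-two root system $A_2$. First I would fix convenient coordinates: realize the $\mathrm{SL}_3$ weight lattice inside $\mathbb{Z}^3/\mathbb{Z}(1,1,1)$ so that $\lambda_1 \leftrightarrow e_1$, $\lambda_2 \leftrightarrow e_1+e_2$, $\rho \leftrightarrow (2,1,0)$, and the positive roots are $\alpha_1=e_1-e_2$, $\alpha_2=e_2-e_3$, $\alpha_1+\alpha_2$. Then $\lambda+\rho \leftrightarrow (l_1+l_2+2,\,l_2+1,\,0)$, and the Weyl group $W\cong S_3$ acts by coordinate permutation. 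The Kostant $q$-partition reduces to the elementary formula
\[
P_q(a\alpha_1+b\alpha_2) = \sum_{j=\max(a,b)}^{a+b} q^j \qquad (a,b \geq 0),
\]
vanishing otherwise; a representative triple $(c_1,c_2,c_3)$ translates to $(a,b)$ via $a=c_1-t$, $b=t-c_3$ with $t=(c_1+c_2+c_3)/3$, which is integer-valued only when $3\mid c_1+c_2+c_3$.

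Three steps follow. (i) The coordinate sum of $w(\lambda+\rho)-\rho$ is $W$-invariant and equals $l_1+2l_2\equiv l_1-l_2\pmod 3$; hence if $3\nmid l_1-l_2$ every term $P_q(w(\lambda+\rho)-\rho)$ vanishes and the Weyl sum is $0$, matching the second case of the formula. (ii) Assuming $3\mid l_1-l_2$, I would enumerate the pairs $(a,b)$ attached to the six elements of $W$ and check nonnegativity. A short case analysis shows that $w=e$ always contributes, $w=s_2$ contributes precisely when $l_1>l_2$, $w=s_1$ contributes precisely when $l_2>l_1$, and the three elements of length $\geq 2$ always produce a negative $a$ or $b$ and drop out. (iii) Assembling the surviving terms with their Weyl signs, the two surviving geometric series share the common lower endpoint $(2\max(l_1,l_2)+\min(l_1,l_2))/3$ and telescope to $\sum_{j=\max(l_1,l_2)}^{l_1+l_2}q^j$; the case $l_1=l_2$ is subsumed automatically because the subtracted sum is empty.

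The substantive content lies in the case analysis in step (ii), but it is entirely routine: each of the six evaluations of $P_q$ reduces to checking the sign of a linear expression in $l_1$ and $l_2$. No serious obstacle is anticipated. A more conceptual alternative would invoke the identification of $\mathfrak{M}^0_\lambda(q)$ with the Kostka--Foulkes polynomial $K_{\lambda,0}(q)$ and evaluate via the Lascoux--Sch\"utzenberger charge statistic on semistandard tableaux, but for type $A_2$ the direct Weyl-sum computation is both shorter and self-contained, so I would adopt it here.
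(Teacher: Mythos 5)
Your proposal is correct and follows essentially the same route as the paper: a direct evaluation of the six-term Weyl sum defining $\mathfrak{M}^0_{\aleph(l_2,l_1)}(q)$, noting that only the identity and one simple reflection contribute and that the two geometric series telescope. Your write-up is more explicit than the paper's (fixing the $\mathbb{Z}^3/\mathbb{Z}(1,1,1)$ coordinates, deriving the elementary formula $P_q(a\alpha_1+b\alpha_2)=\sum_{j=\max(a,b)}^{a+b}q^j$, and using the $W$-invariant coordinate-sum argument for the $3\nmid l_1-l_2$ case), but the underlying computation is the one the authors perform.
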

\begin{proof}
Let us assume $l_1\geq l_2$. 
The Weyl group $W$ has six elements. 
Recall the definition $$\mathfrak{M}^0_{\aleph(l_2,l_1)}(q)= \sum_{w\in W}(-1)^{\textrm{length}(w)} P_q (w(\aleph(l_2,l_1)+\rho)-\rho).$$
It is easy to see that $3|l_1-l_2$ is necessary for $\mathfrak{M}^0_{\aleph(l_2,l_1)}(q)$ to be nonzero.
For $w=1$, we have $$P_q(\aleph(l_2,l_1))=\sum\limits_{\frac{2l_1+l_2}{3}\leq i\leq l_1+l_2}q^i.$$
For the Weyl group element $w\in W$ which sends $\lambda_1$ to $\lambda_1$ and $\lambda_2$ to $\lambda_1-\lambda_2$, we have $$(-1)^{\textrm{length}(w)}P_q(w(\aleph(l_2,l_1)+\rho)-\rho)=
-\sum\limits_{\frac{2l_1+l_2}{3}\leq i\leq l_1-1}q^i.$$
For all other $w\in W$, the Kostant $q$ partition $(-1)^{\textrm{length}(w)}P_q(w(\aleph(l_2,l_1)+\rho)-\rho)$ is zero. 
\end{proof}

\subsubsection{The case of $N=3$}
Assume $N=3$ throughout this subsection.  Let $F$ be a Hecke-Maass cusp form for $\ssl 3 Z$.
The functional equation and analytic continuation of $L(s, F, \adjoint) $ is studied by Ginzburg  in \cite{ginzburg} using an integral representation involving Eisenstein series on the exceptional group $G_2$  and by \cite{ginzburgjiang} using a Siegel-Weil identity for $G_2$. 
It is not yet generally known that $L(s,\pi,\adjoint)$ is holomorphic for an automorphic representation $\pi$ of $\GL_3(\mathbb{A}_K)$ and a number field $K$. 
The functional equation for the adjoint $L$-function $L(s,F,\adjoint)$ is
\begin{equation}\label{eq:functional_eq_adjoint}
\Lambda(s,F,\adjoint)=\Lambda(1-s,F,\adjoint), 
\end{equation}
where we have 
$$\Lambda(s,F,\adjoint) = L(s,F,\adjoint)\Gamma_\mathbb{R}^2(s)\prod_{i\neq j} \Gamma_\mathbb{R}(s-\mu_j(F)+\mu_i(F)).
$$
By the recent work of Hundley \cite{hundley2}, we are able to prove that $\Lambda(s,F,\adjoint)$ is holomorphic on the whole complex plane.



\begin{theorem}\label{thm:holomorph}
The adjoint $L$-function $L(s,F,\adjoint)$ is holomorphic on the complex plane. 
The completed adjoint $L$-function $\Lambda(s,F,\adjoint)$ is holomorphic on the complex plane. 
\end{theorem}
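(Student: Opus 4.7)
My plan is to first establish holomorphy of the completed $L$-function $\Lambda(s,F,\adjoint)$, from which holomorphy of $L(s,F,\adjoint)$ will follow immediately. Setting
$$G(s) := \Gamma_\mathbb{R}^2(s)\prod_{i\neq j}\Gamma_\mathbb{R}(s-\mu_j(F)+\mu_i(F)),$$
we observe that $G$ has poles but no zeros, so $1/G$ is entire. Consequently, if $\Lambda(s,F,\adjoint)$ is entire, then $L(s,F,\adjoint) = \Lambda(s,F,\adjoint)/G(s) = \Lambda(s,F,\adjoint) \cdot (1/G(s))$ is a product of entire functions and therefore entire as well. At each pole of $G$, the function $L(s,F,\adjoint)$ simply acquires a zero of matching order from $1/G$. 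This reduces the theorem to showing that $\Lambda(s,F,\adjoint)$ is entire.

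To carry this out, I would invoke Ginzburg's integral representation in \cite{ginzburg}, which expresses $\Lambda(s,F,\adjoint)$ (up to explicit entire factors) as a Rankin-Selberg-type integral pairing $F$ against an Eisenstein series on the exceptional group $G_2$. By the Langlands theory of Eisenstein series, this integral already provides meromorphic continuation of $\Lambda(s,F,\adjoint)$ to $\mathbb{C}$, with the possible poles confined to a finite set determined by poles of the $G_2$ Eisenstein series. I would then appeal to Hundley's recent work \cite{hundley2} which, in combination with the Siegel-Weil identity for $G_2$ of \cite{ginzburgjiang}, performs explicit residue analysis at each candidate pole location of the Eisenstein series and shows that $\Lambda(s,F,\adjoint)$ is regular there. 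Together with the functional equation \eqref{eq:functional_eq_adjoint} to pin down the behavior in both half-planes, this yields that $\Lambda(s,F,\adjoint)$ is entire on all of $\mathbb{C}$.

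The hardest part is exactly this residue analysis. Ginzburg's original argument in \cite{ginzburg} yields meromorphic continuation and the functional equation, but it cannot by itself eliminate a finite set of candidate poles; this was the obstruction that prevented the full entireness statement in earlier work. The contribution of \cite{hundley2} is precisely to rule out these residual possibilities via a careful analysis of degenerate Eisenstein series on $G_2$. Once that input is in hand, holomorphy of $\Lambda(s,F,\adjoint)$, and hence of $L(s,F,\adjoint)$ via the first paragraph, follows.
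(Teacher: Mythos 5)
Your first paragraph is a correct and clean reduction: since the archimedean factor $G(s)$ is a product of Gamma functions and is therefore nowhere zero, $1/G$ is entire and $L = \Lambda \cdot (1/G)$ is entire once $\Lambda$ is. The paper implicitly uses the same observation.

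However, there is a genuine gap in your second and third paragraphs. You attribute to Hundley's work \cite{hundley2} the result that $\Lambda(s,F,\adjoint)$ itself is regular at the candidate poles, but the paper uses \cite[Theorem 6.1]{hundley2} only to conclude that the \emph{finite-part} $L$-function $L(s,F,\adjoint)$ is holomorphic on $\Re(s)\geq 1/2$. This is not enough to conclude $\Lambda$ is holomorphic there, because $\Lambda = L\cdot G$ and the archimedean factor $G$ can have poles with $\Re(s) \geq 1/2$ precisely when $F$ violates the generalized Ramanujan conjecture at infinity: writing $\{\mu_1,\mu_2,\mu_3\} = \{\sigma+it,-\sigma+it,-2it\}$ with $0\le\sigma\le 5/14$ (Kim--Sarnak), the factor $\Gamma_\mathbb{R}(s-2\sigma)$ contributes a pole at $s=2\sigma$, which can lie in $[1/2, 5/7]$. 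At such a point Hundley's theorem only says $L(2\sigma,F,\adjoint)$ is finite, which would make $\Lambda$ have a pole there unless $L$ actually vanishes. Your proposal never addresses this, and the assertion that Hundley's residue analysis already rules it out is not borne out by the citation.

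The paper closes this gap with a separate argument you omit: using the factorization $\Lambda(s,F,\adjoint) = \Lambda(s,F\times\tilde F)/\bigl(\zeta(s)\Gamma_\mathbb{R}(s)\bigr)$, the completed Rankin--Selberg $L$-function $\Lambda(s,F\times\tilde F)$ is holomorphic in $0<\Re(s)<1$, so a pole of $\Lambda(s,F,\adjoint)$ at $s=2\sigma\in(0,1)$ would force $\zeta(2\sigma)=0$; but $\zeta$ has no real zeros in $(0,1)$, a contradiction. This Rankin--Selberg step, combined with the functional equation to cover $\Re(s)<1/2$, is the actual heart of the proof and is missing from your proposal. Without it, your argument does not establish holomorphy of $\Lambda$ (and hence not of $L$ either, since you obtain $L$ from $\Lambda$).
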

\begin{proof}
By \cite[Theorem 6.1]{hundley2}, $L(s,F,\adjoint)$ is holomorphic on $\Re(s)\geq 1/2$. If $F$ satisfies the generalized Ramanujan (Selberg) conjecture at the archimedean place, i.e., 
$$\Im(\mu_i(F))=0$$
for $i=1,2,3$, then all the poles of the gamma factors are on $\Re(s)=0$. Therefore, $\Lambda(s,F,\adjoint)$ is holomorphic on $\Re(s)\geq 1/2$. 
If $F$ does not satisfy the generalized Ramanujan conjecture at the archimedean place, 
by \reff{unitary}, 
we have 
$$\{\mu_1(F),\mu_2(F),\mu_3(F)\}=\{\sigma+it,-\sigma+it,-2it\},$$
with $\sigma,t\in \mathbb{R}$ and $0\leq \sigma\leq 5/14$ (Kim-Sarnak bound in \cite[Appendix 2]{kim}). 
If $\Lambda(s,F,\adjoint)$ has a pole in $1/2\le \Re(s)<1$, 
it must be $s=2\sigma$ with $1/4\leq \sigma\leq 5/14$.
As we know from the classical Rankin-Selberg theory,
the completed Rankin-Selberg $L$-function 
$\Lambda(s,F\times \tilde{F})$ is holomorphic in $0<\Re(s)<1$.
Thus from the perspective of 
$$\Lambda(s,F,\adjoint)=\frac{\Lambda(s,F\times \tilde{F})}{\zeta(s)\Gamma_\mathbb{R}(s)},$$
we must have $\zeta(2\sigma)=0$. This is false and a contradiction is reached. 
In conclusion, $\Lambda(s,F,\adjoint)$ is holomorphic on $\Re(s)\geq 1/2$ and by its functional equation it is holomorphic on the whole complex plane.
\end{proof}

We are given by Joseph Hundley the following formula for the adjoint $L$-function on $\GL_3$. This formula appears in a different form in his earlier work \cite[(7)]{hundley}. 
It can be viewed as a special case of Theorem \ref{thm:adjoint-lusztig} for the local part $L_p(s, F, \adjoint)$.  
Hundley derived the formula from \cite{ginzburg}, independently from \cite{hesselink}.

\begin{theorem}\label{thm:adjoint-L-function}
For a Hecke-Maass cusp form $F$ with Fourier coefficients $A_F(*,*)$. 
We have for $\Re(s)>1$
\begin{equation}\label{eq:L_p}
L_p(s,F,\adjoint)= \zeta_p(2s)\zeta_p(3s)\sum_{\substack{m_1,m_2=0\\3|m_1-m_2}}^\infty \sum_{j=\max(m_1,m_2)}^{m_1+m_2} \frac{A_F(p^{m_1},p^{m_2})}{p^{js}}
\end{equation}
and $L(s, F, \adjoint)=\prod\limits_{p \text{ is a prime}} L_p(s,F,\adjoint)$.
Moreover, we have for $\Re(s)>1$
\begin{equation}\label{eq:L_sharp}
L(s,F,\adjoint)=\zeta(2s)\zeta(3s)L^\#(s,F,\adjoint)
\end{equation}
and $$L^\#(s,F,\adjoint) 
=\sumad \frac{A_F(n_1,n_2)}{ n^s}.$$
\end{theorem}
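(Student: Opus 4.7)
The plan is to combine Theorem \ref{thm:adjoint-lusztig}, specialized to $N=3$, with the explicit evaluation of the Kazhdan-Lusztig polynomial in Lemma \ref{lem:speical_case_KL}. Specializing Theorem \ref{thm:adjoint-lusztig} gives
\[
L_p(s,F,\adjoint) = \zeta_p(2s)\zeta_p(3s) \sum_{l_1,l_2=0}^\infty \mathfrak{M}^0_{\aleph(l_2,l_1)}(p^{-s})\, A_F(p^{l_2},p^{l_1}).
\]
Substituting Lemma \ref{lem:speical_case_KL}, the double sum restricts to $3\mid l_1 - l_2$, and $\mathfrak{M}^0_{\aleph(l_2,l_1)}(p^{-s}) = \sum_{j=\max(l_1,l_2)}^{l_1+l_2} p^{-js}$ produces precisely the inner $j$-sum of \eqref{eq:L_p}. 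Renaming $(l_1,l_2)$ to $(m_2,m_1)$ (or simply noting the symmetry under $m_1 \leftrightarrow m_2$ of all conditions involved) yields the first assertion.

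For the global formula \eqref{eq:L_sharp}, I would take the Euler product $L(s,F,\adjoint) = \prod_p L_p(s,F,\adjoint)$. The factor $\prod_p \zeta_p(2s)\zeta_p(3s) = \zeta(2s)\zeta(3s)$ pulls out, and the remaining product expands as a Dirichlet series by multiplying across primes: each term selects at each prime $p$ a triple $(m_1^{(p)},m_2^{(p)},j^{(p)})$ satisfying $3\mid m_1^{(p)}-m_2^{(p)}$ and $\max(m_1^{(p)},m_2^{(p)}) \le j^{(p)} \le m_1^{(p)}+m_2^{(p)}$, with trivial choices at all but finitely many primes. Setting $n_1 = \prod_p p^{m_1^{(p)}}$, $n_2 = \prod_p p^{m_2^{(p)}}$, $n = \prod_p p^{j^{(p)}}$, the local conditions translate exactly to the global conditions $n_1/n_2 \in \bra{\mathbb{Q}^\times}^3$ and $[n_1,n_2]\mid n\mid n_1 n_2$. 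Hecke multiplicativity on $\GL_3$ yields $A_F(n_1,n_2) = \prod_p A_F(p^{m_1^{(p)}},p^{m_2^{(p)}})$, and $\prod_p p^{-j^{(p)}s} = n^{-s}$, so the sum reassembles as $\sumad \frac{A_F(n_1,n_2)}{n^s} = L^\#(s,F,\adjoint)$.

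The argument is essentially bookkeeping: all the representation-theoretic and combinatorial content (the Casselman-Shalika formula and the Kostant/Hesselink identification of $q$-graded multiplicities as $\mathfrak{M}^0_\lambda(q)$) has been absorbed into Theorem \ref{thm:adjoint-lusztig}, and the explicit rank-2 computation is done in Lemma \ref{lem:speical_case_KL}. The main point requiring care is the faithful translation between the per-prime conditions $\{3\mid m_1^{(p)}-m_2^{(p)},\ \max(m_1^{(p)},m_2^{(p)}) \le j^{(p)} \le m_1^{(p)}+m_2^{(p)}\}$ and the global conditions $\{n_1/n_2 \in \bra{\mathbb{Q}^\times}^3,\ [n_1,n_2]\mid n\mid n_1 n_2\}$ in the Euler-product expansion. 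Beyond this verification, no genuine analytic or representation-theoretic obstacle arises in the proof.
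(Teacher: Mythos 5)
Your proposal is correct and follows exactly the paper's route: the paper's proof is a one-line citation of Theorem~\ref{thm:adjoint-lusztig} and Lemma~\ref{lem:speical_case_KL} for \eqref{eq:L_p}, plus the remark that \eqref{eq:L_p} implies \eqref{eq:L_sharp}. You simply spell out the bookkeeping (substituting the explicit Kazhdan--Lusztig polynomial, and translating the per-prime constraints into the global conditions $n_1/n_2\in\bra{\mathbb{Q}^\times}^3$ and $[n_1,n_2]\mid n\mid n_1n_2$ via Hecke multiplicativity) that the paper leaves implicit.
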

\begin{proof}
Obviously \reff{eq:L_p} implies \reff{eq:L_sharp}. 
Lemma \ref{lem:speical_case_KL} and Theorem \ref{thm:adjoint-lusztig} imply 
\reff{eq:L_p}.
\end{proof}

\section{The proof of the main theorem }

In order to prove the main theorem (Theorem \ref{thm:main_thm}), by the Stone-Weierstrass theorem (also the Peter-Weyl theorem as in \cite[Theorem 7.1]{zhou2}), it is sufficient to prove the following one, with effective error term.

\begin{theorem}\label{thm:mainthm_prep}
For integers $l_1,l_2\geq 0$, we have 
$$  \sum\limits_F A_F(p^{l_2},p^{l_1}) h_T(\nu_F)  = \mathfrak{M}^0_{\aleph(l_2,l_1)}(p^{-1}) \bra{\frac{1}{64\pi^5} \int_{\Re(\nu)=0} h_T(\nu) \spec(\nu) d\nu} +\mathcal{O}\bra{T^{\frac{14-4\varepsilon}{3}+\epsilon}p^{\frac{l_1+l_2}{2}+\epsilon}}$$
for $\epsilon>0$.
\end{theorem}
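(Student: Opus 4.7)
The key idea is to artificially insert the weight $L(1,F,\adjoint)^{-1}$, compensate by substituting the Dirichlet series for $L(1,F,\adjoint)$ itself, and then invoke the $\GL_3$ Kuznetsov trace formula of Blomer--Buttcane.  The combinatorial shape of the adjoint $L$-function from Theorem~\ref{thm:adjoint-L-function} makes the Kazhdan--Lusztig polynomial value $\mathfrak{M}^0_{\aleph(l_2,l_1)}(p^{-1})$ appear automatically in the main term.

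\textbf{Step 1 (Approximate functional equation).}  Beginning with the trivial identity
$$\sum_F A_F(p^{l_2},p^{l_1})\,h_T(\nu_F) = \sum_F \frac{A_F(p^{l_2},p^{l_1})\,L(1,F,\adjoint)}{L(1,F,\adjoint)}\,h_T(\nu_F),$$
I use Theorem~\ref{thm:holomorph} together with the functional equation \eqref{eq:functional_eq_adjoint} in a standard contour-shift argument (Lemma~\ref{lemma:approximate_dirichlet}) to obtain
$$L(1,F,\adjoint) = \zeta(2)\zeta(3)\sumad \frac{A_F(n_1,n_2)}{n}\,V_1\!\bra{\tfrac{n}{X}} + (\text{dual sum}),$$
where $V_1,V_2$ are smooth cutoffs encoding the gamma factors and $X \asymp T^{3+o(1)}$ balances the two sides (reflecting the analytic conductor $\asymp T^{6}$ of $L(s,F,\adjoint)$).

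\textbf{Step 2 (Hecke multiplicativity and Kuznetsov).}  Splitting $(n_1,n_2)$ into its $p$-part and prime-to-$p$ part and applying the $\GL_3$ Hecke relations (the Schur/Pieri rule), I expand
$$A_F(p^{l_2},p^{l_1})\,A_F(n_1,n_2) = \sum_{(m_1,m_2)} c\bra{l_1,l_2;n_1,n_2 \mid m_1,m_2} A_F(m_1,m_2),$$
with nonnegative integer coefficients supported on a finite, $p$-controlled set.  I then apply the Blomer--Buttcane Kuznetsov formula to each spectral sum $\sum_F A_F(m_1,m_2)\,L(1,F,\adjoint)^{-1}\,h_T(\nu_F)$, paired against $\overline{A_F(1,1)}=1$.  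This splits into a diagonal spectral main term (nonzero only for $(m_1,m_2)=(1,1)$), a small Eisenstein contribution, and Kloosterman sums weighted by integral transforms of $h_T$.

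\textbf{Step 3 (Emergence of the main term).}  The diagonal $(m_1,m_2)=(1,1)$ forces the Hecke-product expansion to contribute only when $(n_1,n_2)$ equals the $\GL_3$-dual of $(p^{l_2},p^{l_1})$ on the $p$-part and is trivial away from $p$; this means $n_1=p^{l_1}$, $n_2=p^{l_2}$, $3\mid l_1-l_2$, and $n=p^j$ for $\max(l_1,l_2)\le j\le l_1+l_2$.  The AFE factor $\sum_j p^{-j}$ over this range is, by Lemma~\ref{lem:speical_case_KL}, precisely $\mathfrak{M}^0_{\aleph(l_2,l_1)}(p^{-1})$.  After the $\zeta(2)\zeta(3)$ from the AFE reconciles with the normalization of the Kuznetsov diagonal, one recovers
$$\mathfrak{M}^0_{\aleph(l_2,l_1)}(p^{-1})\cdot \frac{1}{64\pi^5}\int_{\Re(\nu)=0} h_T(\nu)\,\spec(\nu)\,d\nu.$$

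\textbf{Step 4 (Error term).}  Everything else --- the dual half of the AFE, the Eisenstein contribution, and the off-diagonal Kloosterman pieces --- must be absorbed into $\mathcal{O}\bra{T^{(14-4\varepsilon)/3 + \epsilon}\,p^{(l_1+l_2)/2 + \epsilon}}$.  The $p$-dependence comes from Weil-type bounds on the hyper-Kloosterman sums interacting with the $p$-parts of the Hecke coefficients; the $T$-exponent is obtained by feeding the integral representations \eqref{eq:LongEleIntRepn} and \eqref{eq:w4IntRepn} into the geometric side to derive a sharp cutoff on the Kloosterman moduli that depends on the spectral localization width $T^{1-\varepsilon}$ of $h_T$.

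\textbf{Main obstacle.}  The delicate point is Step~4, and specifically producing the sharp exponent $(14-4\varepsilon)/3$ uniformly in $p$.  The length of the AFE is as large as $X\asymp T^{3}$, so the Kloosterman moduli we must sum over are correspondingly long, and only the sharpened $\GL_3$ integral representations of the paper (far beyond the bounds available in \cite{blomer, goldfeldkontorovich}) are strong enough to save a power of $T$ over the trivial estimate.  Balancing this cutoff against the spectral localization of $h_T$ is precisely where the exponent $(14-4\varepsilon)/3$ is forced.
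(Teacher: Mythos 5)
Your high-level strategy matches the paper's: insert and then remove the weight $L(1,F,\adjoint)^{-1}$, replace $L(1,F,\adjoint)$ by its Dirichlet series via Lemma~\ref{lemma:approximate_dirichlet}, and invoke the sharpened $\GL_3$ Kuznetsov formula. The main term then falls out of the diagonal and Lemma~\ref{lem:speical_case_KL}, and the final exponent comes from balancing. That much is right.

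However, there are two places where you deviate from the paper, and the first one introduces unnecessary (and possibly harmful) complications. In Step~2 you expand $A_F(p^{l_2},p^{l_1})\,A_F(n_1,n_2)$ via the $\GL_3$ Hecke/Pieri relations and then pair each resulting $A_F(m_1,m_2)$ against $\overline{A_F(1,1)}=1$ in the Kuznetsov formula. This detour is not needed: Theorem~\ref{thm:ValThm5} is already stated for a \emph{general bilinear} pair $(n_1,n_2)$, $(m_1,m_2)$, so the paper simply takes $(m_1,m_2)=(p^{l_1},p^{l_2})$ and sums over the $(n_1,n_2)$ coming from the approximate Dirichlet series. The diagonal condition $\delta_{n=m}$ of Theorem~\ref{thm:ValThm5} then directly selects $(n_1,n_2)=(p^{l_1},p^{l_2})$ with $\max(l_1,l_2)\le l\le l_1+l_2$, and Lemma~\ref{lem:speical_case_KL} converts the resulting geometric sum into $\mathfrak{M}^0_{\aleph(l_2,l_1)}(p^{-1})$. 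Your Hecke-expansion route not only duplicates this but also risks inflating the off-diagonal error, since the Hecke coefficients $c(\ldots)$ and the resulting indices $(m_1,m_2)$ can be as large as powers of $p^{l_1+l_2}$, and the Kuznetsov error grows like $P^{1/2}$ in $P=m_1m_2n_1n_2$.

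The second deviation is in the analytic treatment of $L(1,F,\adjoint)$. You propose a \emph{balanced} approximate functional equation with a dual sum; the paper uses a one-sided truncation: $L(1,F,\adjoint)=\zeta(2)\zeta(3)\sum A_F(n_1,n_2)n^{-1}e(-n/X)+I_F(X)$, and then bounds $I_F(X)$ pointwise by convexity. Crucially, the contribution $\mathcal{E}_2$ of $I_F(X)$ is estimated by Cauchy--Schwarz against the spectral measure, with one factor bounded again by Theorem~\ref{thm:ValThm5} (with $(n_1,n_2)=(m_1,m_2)=(p^{l_2},p^{l_1})$); this is exactly where the factor $T^{3/2+\epsilon}X^{-1/2}$, and hence the final exponent $(14-4\varepsilon)/3$, is produced after balancing against $\mathcal{E}_1$ at $X=T^{(11-4\varepsilon)/3}$. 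Your Step~4 gestures at the balancing but never identifies the Cauchy--Schwarz step or the fact that the convexity bound (not the geometric side alone) is the bottleneck. If you pursue the balanced AFE instead, the dual sum would produce a second round of Kloosterman contributions with root number twists, which is strictly more work and offers no gain here. I recommend following the paper's one-sided truncation and making the Cauchy--Schwarz step explicit.
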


The value of $L(1,F,\adjoint) $ can be approximated by the Dirichlet series in Theorem \ref{thm:adjoint-L-function}.  
\begin{lemma}\label{lemma:approximate_dirichlet}Let $F$ be a Maass cusp form with spectral parameters $(\mu_1(F),\mu_2(F),\mu_3(F))\in \mathfrak{a}^*_\mathbb{C`}$. For $0<\sigma<1$, we have for $X>0$
$$L(1,F,\adjoint)=\zeta(2)\zeta(3) \sumad
\frac{A_F(n_1,n_2)}{n} e\bra{-\frac{n}{X}}+ I_F(X)$$
with 
$$I_F(X)\ll \bra{\prod_{i\neq j}|\mu_i(F)-\mu_j(F)|}^{\frac{1	}{4}+\epsilon} X^{-\frac{1}{2}}.$$
\end{lemma}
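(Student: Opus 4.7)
The plan is to realize the smoothed sum on the right-hand side as an inverse Mellin transform of $L(s,F,\adjoint)$, then shift contours past the pole of $\Gamma(s)$ at $s=0$ to recover $L(1,F,\adjoint)$, with the remaining integral giving $I_F(X)$.

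Starting from the Mellin identity $e^{-y}=\frac{1}{2\pi i}\int_{(c)}\Gamma(s) y^{-s}\,ds$ for $c>0$, and using \eqref{eq:L_sharp} of Theorem \ref{thm:adjoint-L-function} to express $\zeta(2)\zeta(3)L^{\#}(1+s,F,\adjoint)=\frac{\zeta(2)\zeta(3)}{\zeta(2+2s)\zeta(3+3s)}L(1+s,F,\adjoint)$, I would write, for $c$ large,
\[
\zeta(2)\zeta(3)\sumad \frac{A_F(n_1,n_2)}{n}e^{-n/X}
=\frac{1}{2\pi i}\int_{(c)}\Gamma(s)X^{s}\frac{\zeta(2)\zeta(3)}{\zeta(2+2s)\zeta(3+3s)}L(1+s,F,\adjoint)\,ds.
\]
The interchange is legitimate because $L^{\#}(1+s,F,\adjoint)$ converges absolutely for $\Re s>0$. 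Next, shift the contour from $\Re s=c$ down to $\Re s=-1/2$. By Theorem \ref{thm:holomorph}, $L(1+s,F,\adjoint)$ is entire; the zeros of $\zeta(2+2s)\zeta(3+3s)$ in the strip contribute no poles; the only pole encountered is the simple pole of $\Gamma(s)$ at $s=0$, with residue
\[
X^{0}\cdot\frac{\zeta(2)\zeta(3)}{\zeta(2)\zeta(3)}\cdot L(1,F,\adjoint)=L(1,F,\adjoint).
\]
This identifies $L(1,F,\adjoint)$ as the main term and leaves $I_F(X)$ as the integral along $\Re s=-1/2$.

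To bound $I_F(X)$, I would estimate each factor on the contour $s=-1/2+it$: the factor $X^{s}$ contributes $X^{-1/2}$; Stirling gives $|\Gamma(-1/2+it)|\ll (1+|t|)^{-1}e^{-\pi|t|/2}$, and standard zero-free region bounds give $|\zeta(2+2s)\zeta(3+3s)|^{-1}\ll (\log(2+|t|))^{O(1)}$. The essential input is a convexity bound for $L(1/2+it,F,\adjoint)$ via the functional equation \eqref{eq:functional_eq_adjoint} and Phragmén--Lindelöf: since the analytic conductor is
\[
\mathcal{C}(1/2+it)\asymp (1+|t|)^{2}\prod_{i\neq j}\bigl(1+|t+i(\mu_{i}(F)-\mu_{j}(F))|\bigr),
\]
one has $|L(1/2+it,F,\adjoint)|\ll \mathcal{C}(1/2+it)^{1/4+\epsilon}$. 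The exponential decay of $\Gamma$ localizes the $t$-integral to $|t|\ll 1$, where $\mathcal{C}\asymp\prod_{i\neq j}(1+|\mu_i(F)-\mu_j(F)|)$, yielding
\[
|I_F(X)|\ll X^{-1/2}\Bigl(\prod_{i\neq j}|\mu_i(F)-\mu_j(F)|\Bigr)^{1/4+\epsilon},
\]
as required.

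The main obstacle is the rigorous justification of the convexity bound uniformly in the archimedean parameters $\mu_i(F)$. One must verify Phragmén--Lindelöf for $L(s,F,\adjoint)$ with polynomial control both in $t$ and in the $\mu_i(F)$: the right edge $\Re s=1+\delta$ is trivial by absolute convergence, and the left edge $\Re s=-\delta$ is obtained via the functional equation \eqref{eq:functional_eq_adjoint} together with Stirling for the gamma ratios; Theorem \ref{thm:holomorph} is what allows interpolation across the full strip without extra pole contributions. A mild additional point is handling the apparent singularity of the integrand at $s=-1/2$ where $\zeta(2+2s)$ has a pole of $\zeta(1)$-type, but since this corresponds to a \emph{zero} of $1/\zeta(2+2s)$, the contour along $\Re s=-1/2$ is admissible and causes no trouble.
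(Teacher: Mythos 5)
Your proposal is correct and takes essentially the same approach as the paper: Mellin representation of the smoothed Dirichlet series, contour shift to $\Re s=-1/2$ picking up the residue $L(1,F,\adjoint)$ at the pole of $\Gamma(s)$, and then the convexity bound for $L(s,F,\adjoint)$ via Phragm\'en--Lindel\"of (which relies on Theorem \ref{thm:holomorph} and the functional equation \eqref{eq:functional_eq_adjoint}) to bound the remaining integral; the paper merely writes the same contour-shift identity in the reverse direction. Your added remarks---that $1/\zeta(2s+2)$ has a zero rather than a pole on the line $\Re s=-1/2$, and that the Phragm\'en--Lindel\"of bound must be uniform in the $\mu_i(F)$---are correct and fill in details the paper leaves implicit.
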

\begin{proof}
We begin with
\begin{align*}
 -I_F(X)&=\frac{{\zeta(2)\zeta(3)}}{2\pi i}\int_{(-1/2)}\frac{L(s+1,F,\adjoint)}{\zeta(2s+2)\zeta(3s+3)}\Gamma(s)X^s \dd s\\
&=-L(1,F,\adjoint)+\frac{{\zeta(2)\zeta(3)}}{2\pi i}\int_{(+)}\frac{L(s+1,F,\adjoint)}{\zeta(2s+2)\zeta(3s+3)}\Gamma(s)X^s \dd s\\
&=-L(1,F,\adjoint)+{\zeta(2)\zeta(3)} \sumad \frac{A_F(n_1,n_2)}{n} e\bra{-\frac{n}{X}}
\end{align*}
by the Mellin inversion.
Because $\Lambda(s,F,\adjoint) $ is holomorphic from Theorem \ref{thm:holomorph} and satisfies the functional equation \reff{eq:functional_eq_adjoint}, $L(s,F,\adjoint)$ satisfies  the convexity bound by
 the Phragm\'en-Lindel\"of principle. 
We apply the convexity bound to $L(s+1,F,\adjoint)$ on the vertical line $\Re (s+1)=1/2$ and we get the bound for $I_F(X)$.
\end{proof}

The following theorem improves \cite[Theorem 5]{blomer} and \cite[Theorem 1.3]{goldfeldkontorovich}. It is an application of the Kuznetsov trace formula on $\GL_3$ and it  will be proved in the remaining sections.
\begin{theorem}
\label{thm:ValThm5}
Let $P=m_1 m_2 n_1 n_2\ne 0$, then we have 
\begin{align*}
	\mathcal{C} := \sum_F \frac{A_F(n_1,n_2)\wbar{A_F(m_1, m_2)}}{L(1,F,\adjoint)} h_T\paren{\nu_F} = \;& \Delta+\mathcal{O}\paren{(TP)^\epsilon \paren{T P^{1/2}+T^3 P^{1/6}}},
\end{align*}
where we have 
\begin{align*}
	\Delta = \; \delta_{n=m} \frac{1}{64\pi^5} \int_{\Re(\nu)=0} h_T(\nu) \spec(\nu) d\nu, \qquad \spec(\nu) := \prod_{j=1}^3 \left(3\nu_j \tan\Bigl(\frac{3\pi}{2} \nu_j\Bigr)\right).
\end{align*}
\end{theorem}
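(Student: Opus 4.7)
The plan is to apply the sharpened $\GL_3$ Kuznetsov trace formula of \cite{blomerbuttcane1,blomerbuttcane2} to $\mathcal{C}$. The formula yields
$$
\mathcal{C} + \mathrm{Eis}(h_T) = \Delta + \sum_{w \in \{w_4,\,w_5,\,w_6\}} \mathcal{K}_w(h_T;n,m),
$$
where $\Delta$ is the diagonal (main) term, $\mathrm{Eis}$ collects the minimal and maximal parabolic Eisenstein contributions (weighted by inverse adjoint $L$-values of the Eisenstein series), and $\mathcal{K}_w$ is the Kloosterman sum attached to the Weyl element $w$ weighted by an integral transform $\widetilde{h}_T^{(w)}$ of $h_T$. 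Since $P = m_1 m_2 n_1 n_2 \ne 0$, the degenerate Weyl elements $w_2$ and $w_3$ make no contribution. The factor $P(\nu)^2$ in \eqref{eq:def_h_T} is chosen precisely to cancel the poles of the Plancherel density $\spec(\nu)$ inside a wide tube, so that the spectral integral defining $\Delta$ converges absolutely and contour shifts are permitted throughout the argument.

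The first step is to dispose of the Eisenstein contributions using standard convexity bounds for $L(1,\mathrm{Eis},\adjoint)^{-1}$ combined with the integral-transform analysis outlined below; these contributions fall within the claimed error. The crux of the proof is the estimation of the three cuspidal Kloosterman contributions using the integral representations \eqref{eq:LongEleIntRepn} and \eqref{eq:w4IntRepn}. For $w_4$ (and symmetrically $w_5$), the representation \eqref{eq:w4IntRepn} realizes $\widetilde{h}_T^{(w_4)}(c;n,m)$ as an explicit oscillatory integral whose phase is essentially linear in the spectral variable $\nu$. Because $h_T$ is Gaussian-localized at scale $T^{1-\varepsilon}$ around $T\nu_0\in T\Omega$, repeated integration by parts produces arbitrary polynomial decay of the transform once the modulus exceeds a sharp threshold; combined with the $\GL_3$ Weil-type bound for $S_{w_4}$ and summation over the moduli up to this threshold, one obtains $\mathcal{K}_{w_4}\ll (TP)^\epsilon\,TP^{1/2}$. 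For the long element $w_6$, the analogous analysis starting from \eqref{eq:LongEleIntRepn} on the two-dimensional modulus $(c_1,c_2)$, together with the Weil-type bound $|S_{w_6}(n,m;c)|\ll (c_1c_2)^{1/2+\epsilon}$ applied up to the resulting support $c_1 c_2\ll T^6 P^{1/2}$, produces $\mathcal{K}_{w_6}\ll (TP)^\epsilon\,T^3 P^{1/6}$.

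The main obstacle is the stationary-phase analysis of the Bessel-type kernels in the transition regions where the phase becomes degenerate; this is precisely what the sharp integral representations \eqref{eq:LongEleIntRepn} and \eqref{eq:w4IntRepn} are designed to resolve, and it is where the present work improves on \cite{blomer,goldfeldkontorovich}. Crucially, these representations hold largely independently of the particular form of $h_T$, so the cutoffs on the moduli are robust; the polynomial $P(\nu)^2$ absorbs the spectral-side singularities and enables the necessary contour shifts in the integral transforms. Once the cutoffs are in place, the remaining steps — invoking the elementary Weil-type bound for $\GL_3$ Kloosterman sums, summing over the truncated modulus ranges, and combining the three Weyl-element contributions with the Eisenstein bound — produce the stated estimate $\mathcal{O}\bigl((TP)^\epsilon(TP^{1/2} + T^3 P^{1/6})\bigr)$.
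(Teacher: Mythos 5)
Your overall skeleton (apply the Kuznetsov formula, split into diagonal, Eisenstein, and Kloosterman terms indexed by $w_4,w_5,w_6$, and note $P\ne 0$ kills the degenerate Weyl elements) matches the paper. But the mechanism you describe for bounding the Kloosterman terms is not what the paper does, and your stated intermediate quantities do not produce the claimed error.

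First, you attribute the cutoff on the moduli to stationary-phase and integration-by-parts analysis of the new integral representations \eqref{eq:LongEleIntRepn}, \eqref{eq:w4IntRepn}. In the paper these cutoffs are \emph{not} derived here; they are imported from \cite[Lemmas 8, 9]{blomerbuttcane1} and give $D_1D_2 \ll P T^{-4}(TP)^\epsilon$ for $w_6$ and $D_1D_2 \ll P T^{-3}(TP)^\epsilon$ with $y\gg T^{3-\epsilon}$ for $w_4$. The role of the new integral representations in this paper is entirely different: after pushing the $\mu$-integration inside to get the transform $\widecheck{h}_T$, one bounds \eqref{eq:LongEleIntRepn} and \eqref{eq:w4IntRepn} \emph{trivially} (a log bound on the Bessel kernel, trivial bound on $\widecheck{h}_T$, integration by parts only to constrain $v_i=1+\mathcal{O}(T^{\varepsilon-1})$) to obtain the sharp magnitude bounds $\Phi_{w_6}\ll T^{3+\epsilon}$ and $\Phi_{w_4}\ll T^{3+\epsilon}(y^{1/6}+y^{-1/6})$ of Proposition \ref{prop:Bounds}. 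That is the paper's actual improvement; you have the division of labor between the cutoff and the size bound backwards.

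Second, the arithmetic in your $w_6$ estimate does not close. You state a cutoff $c_1c_2 \ll T^6 P^{1/2}$, which disagrees with the paper's $D_1D_2 \ll PT^{-4}(TP)^\epsilon$, and even granting yours, plugging a Weil bound $(c_1c_2)^{1/2+\epsilon}$ and $\Phi_{w_6}\ll T^{3+\epsilon}$ into $\sum_{c_1c_2\ll X} |S|/(c_1c_2)\,|\Phi_{w_6}|$ gives $T^{3+\epsilon}X^{1/2+\epsilon}\approx T^6 P^{1/4}$, nowhere near $T^3P^{1/6}$. (Using the correct cutoff $X\asymp PT^{-4}$ the same naive computation yields $\asymp TP^{1/2}$, consistent with one of the terms; the $T^3P^{1/6}$ term is dominated by the Eisenstein contribution, which the paper bounds via the Kim-Sarnak exponent $\theta=7/64<1/6$, not by a ``convexity bound for $L(1,\mathrm{Eis},\adjoint)^{-1}$'' as you suggest, and the $w_6$ term itself requires the finer argument of \cite[Proposition 3]{blomer} rather than a naive Weil bound.) You have also left out Larsen's bound \cite[Appendix]{BFG} for the $\tilde S$ Kloosterman sums attached to $w_4$, $w_5$, which is specifically needed there and is not the generic Weil bound. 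As written, the proposal is a plausible heuristic but misidentifies where the new input of the paper lies and does not actually produce the stated exponents.
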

\begin{proof}[Proof of Theorem \ref{thm:mainthm_prep}]
We start with the formal sum
\begin{align*}
\sum_F& A_F(p^{l_2},p^{l_1}) h_T(\nu_F)  \\
&= \sum_F \frac{A_F(p^{l_2},p^{l_1})}{L(1,F,\adjoint)} L(1,F,\adjoint) h_T(\nu_F) 
\\
&=\sum_F \frac{A_F(p^{l_2},p^{l_1})}{L(1,F,\adjoint)} \bra{
\zeta(2)\zeta(3)
\sumad \frac{A_F(n_1,n_2)}{n}e\bra{-\frac n X}+I_F(X)} h_T(\nu_F)\\
&=\zeta(2)\zeta(3)\bra{\mathcal{M}+\mathcal{E}_1}+\mathcal{E}_2,
\end{align*}
where the main term is by Theorem \ref{thm:ValThm5}
$$\mathcal M= \bra{\sum_{\max\{l_1,l_2\}\leq  l \leq l_1+l_2} \frac{1}{	p^l} e\bra{-\frac {p^l}{ X}}}\bra{\frac{1}{64\pi^5} \int_{\Re(\nu)=0} h_T(\nu) \spec(\nu) d\nu}$$
and the error terms are ($\mathcal{E}_1$ comes from Theorem \ref{thm:ValThm5})
\begin{align*}
\mathcal{E}_1=&
\sumad \frac{1}{n} e\bra{-\frac{n}{X}}
\mathcal{O}\bra{(TP)^{\epsilon}\bra{TP^{1/2}+T^3P^{1/6}}},\quad \; 
\text{with }P=n_1n_2p^{l_1}p^{l_2}
\\=&\sum_{n=1}^\infty \frac{1}{n} e\bra{-\frac{n}{X}}
\mathcal{O}\bra{(TP)^{\epsilon}\bra{TP^{1/2}+T^3P^{1/6}}}, \quad  \;	
\text{with }P=n^2p^{l_1+l_2}
\\=&\;\mathcal{O}\bra{(TX^2)^\epsilon\bra{TX+T^3X^{1/3}}p^{\frac{l_1+l_2}{2}+\epsilon}}
\end{align*}
and 
\begin{align*}
\mathcal{E}_2&=\sum_F \frac{A_F(p^{l_2},p^{l_1})}{L(1,F,\adjoint)}I_F(X) h_T(\nu_F)\\
&\leq \bra{\sum_F \frac{|A_F(p^{l_2},p^{l_1})|^2}{L(1,F,\adjoint)} h_T(\nu_F) }^{1/2}\bra{\sum_F\frac{|I_F(X)|^2}{L(1,F,\adjoint)} h_T(\nu_F) }^{1/2}\\
&\ll \bra{\frac{1}{64\pi^5} \int_{\Re(\nu)=0} h_T(\nu) \spec(\nu) d\nu} p^{\frac{l_1+l_2}{2}+\epsilon}T^{\frac{3}{2}+\epsilon}X^{-\frac 1 2},
\end{align*}
because Lemma \ref{lemma:approximate_dirichlet} and Theorem \ref{thm:ValThm5} imply 
that for $\delta>0$ and  for $X\gg T^{3+\delta}$, we have 
$$\sum_F\frac{|I_F(X)|^2}{L(1,F,\adjoint)} h_T(\nu_F) \ll  
\bra{\frac{1}{64\pi^5} \int_{\Re(\nu)=0} h_T(\nu) \spec(\nu) d\nu}
T^{3 +\epsilon} X^{- 1}.$$
Since we have 
$$\frac{1}{64\pi^5} \int_{\Re(\nu)=0} h_T(\nu) \spec(\nu) d\nu = C T^{5-2\varepsilon}+\mathcal{O}(T^{5-3\varepsilon}) \text{ for some constant }C>0,$$
balancing $\mathcal{E}_1$ and $\mathcal{E}_2$ by taking $X=T^{\frac{11-4\varepsilon}{3}}$, we get 
$\sum_F A_F(p^{l_2},p^{l_1}) h_T(\nu_F)$ is equal to 
$$\zeta(2)\zeta(3)\bra{\sum_{\max\{l_1,l_2\}\leq  l \leq l_1+l_2} \frac{1}{	p^l} }\bra{\frac{1}{64\pi^5} \int_{\Re(\nu)=0} h_T(\nu) \spec(\nu) d\nu} + \mathcal{O}\bra{ T^{\frac {14-4\varepsilon}{3}  +\epsilon} p^{\frac{l_1+l_2}{2}+\epsilon}}.\qedhere
$$
\end{proof}
\nothing{
\begin{lemma}\label{lem:weyllawdifferent}
For $\delta>0$ and  for $X\gg T^{3+\delta}$, we have 
$$\sum_F\frac{|I_F(X)|^2}{L(1,F,\adjoint)} h_T(\nu_F) \ll  
\bra{\frac{1}{64\pi^5} \int_{\Re(\nu)=0} h_T(\nu) \spec(\nu) d\nu}
T^{3 +\epsilon} X^{- 1}.$$
\end{lemma}
\begin{proof}
This is an application of Lemma \ref{lemma:approximate_dirichlet} and Theorem \ref{thm:ValThm5}.
\end{proof}
}

\begin{remark}
The bound for
$$\sum_F\frac{|I_F(X)|^2}{L(1,F,\adjoint)} h_T(\nu_F) \ll  
\bra{\frac{1}{64\pi^5} \int_{\Re(\nu)=0} h_T(\nu) \spec(\nu) d\nu}
T^{3 +\epsilon} X^{- 1}$$
comes from the individual convexity bound for $L(s,F,\adjoint)$ in Lemma \ref{lemma:approximate_dirichlet}. This is far from what can be conjectured. 
Under the generalized Lindel\"of hypothesis, $T^{3+\epsilon}$ can be replaced with $T^\epsilon$ and 
greater power saving can be achieved.
The average of the Lindel\"of hypothesis or the subconvexity bound is not available for this  family of $L$-functions $\{L(s,F,\adjoint):F\text{ is a Hecke-Maass cusp form for } \ssl 3 Z\}$. 
\end{remark}


\section{The Kuznetsov Formula}\label{kuznetsovTF}
In this section we state the Kuznetsov formula for the particular test function $h_T$ described in Section \ref{FWOrtho} above.
We will be brief and refer to \cite[Section 3]{blomerbuttcane1} for more details and notation.  
In particular, we will not require the precise definition of the two Kloosterman sums $\tilde{S}(n_1,n_2,m_1;D_1,D_2)$ and $S(n_1, n_2, m_1, m_2; D_1, D_2)$ given in \cite[Section 5.1 and (1.1)]{blomerbuttcane2}, and we will treat the two Eisenstein series terms, which we denote $\mathcal{E}_\text{max}$ and $\mathcal{E}_\text{min}$, occuring in \cite[Theorem 4]{buttcane} trivially.

For $s = (s_1, s_2) \in \Bbb{C}^2$, $\mu \in \Bbb{C}^3$ with $\mu_1 + \mu_2 + \mu_3 = 0$  define the meromorphic functions
\begin{align}\label{Gsmu}
G(s, \mu) :=& \frac{1}{\Gamma(s_1 + s_2)} \prod_{j=1}^3 \Gamma(s_1 - \mu_j) \Gamma(s_2 + \mu_j), \\
\tilde{G}^{\pm}(s, \mu) :=& \frac{  \pi^{-3s}}{12288 \pi^{7/2}}\Biggl(\prod_{j=1}^3\frac{\Gamma(\frac{1}{2}(s-\mu_j))}{\Gamma(\frac{1}{2}(1-s+\mu_j))} \pm i   \prod_{j=1}^3\frac{\Gamma(\frac{1}{2}(1+s-\mu_j))}{\Gamma(\frac{1}{2}(2-s+\mu_j))} \Biggr),
\end{align}
and the following trigonometric functions
\begin{displaymath}
\begin{split}
& S^{++}(s, \mu) := \frac{1}{24 \pi^2} \prod_{j=1}^3 \cos\left(\frac{3}{2} \pi \nu_j\right),\\
&  S^{+-}(s, \mu) :=  -\frac{1}{32 \pi^2} \frac{\cos(\frac{3}{2} \pi \nu_2)\sin(\pi(s_1 - \mu_1))\sin(\pi(s_2 + \mu_2))\sin(\pi(s_2 + \mu_3))}{\sin(\frac{3}{2} \pi \nu_1)\sin(\frac{3}{2} \pi \nu_3) \sin(\pi(s_1+s_2))}, \\
& S^{-+}(s, \mu) :=-\frac{1}{32 \pi^2}  \frac{\cos(\frac{3}{2} \pi \nu_1)\sin(\pi(s_1 - \mu_1))\sin(\pi(s_1 - \mu_2))\sin(\pi(s_2 + \mu_3))}{\sin(\frac{3}{2} \pi \nu_2)\sin(\frac{3}{2} \pi \nu_3)\sin(\pi(s_1+s_2))}, \\
& S^{--}(s, \mu) := \frac{1}{32 \pi^2}  \frac{\cos(\frac{3}{2} \pi \nu_3) \sin(\pi(s_1 - \mu_2))\sin(\pi(s_2 + \mu_2))}{\sin(\frac{3}{2} \pi \nu_2)\sin(\frac{3}{2} \pi \nu_1)}. 
  \end{split}
\end{displaymath}
Then for $y = (y_1, y_2) \in (\Bbb{R}\setminus \{0\})^2$ with $\text{{\rm sgn}}(y_1) = \alpha_1$, $\text{{\rm sgn}}(y_2) = \alpha_2$, let
\begin{equation}\label{defK}
\begin{split}
 K^{\alpha_1, \alpha_2}_{w_6}(y; \mu)  =    & \int_{-i\infty}^{i\infty} \int_{-i\infty}^{i\infty}  |4\pi^2 y_1|^{-s_1} |4\pi^2 y_2|^{-s_2}  G(s, \mu) S^{\alpha_1, \alpha_2}(s, \mu)\frac{ds_1\, ds_2}{(2\pi i)^2}, 
\end{split}
\end{equation}
and for $y \in \Bbb{R} \setminus \{0\}$ with $\text{{\rm sgn}}(y) = \alpha$, let
\begin{equation}\label{defKw4}
	K_{w_4}(y; \mu) =  \int_{-i\infty}^{i\infty} | y|^{-s} \tilde{G}^{\alpha}(s, \mu) \frac{ds}{2\pi i}.
\end{equation}
The paths of integration must be chosen according to the Barnes convention as in \cite[Definition 1]{blomerbuttcane1}. 
Then for $n_1, n_2, m_1, m_2 \in \Bbb{N}$ and $h_T$ as above we have 
\begin{displaymath}
\begin{split}
&  \mathcal{C} = \Delta + \Sigma_4 + \Sigma_5 + \Sigma_6 - \mathcal{E}_\text{max} - \mathcal{E}_\text{min},
\end{split}
\end{displaymath}
with $\mathcal{C}$ and $\Delta$ as in Theorem \ref{thm:ValThm5},
\begin{displaymath}
\begin{split}
   \Sigma_{4}& = \sum_{\alpha  = \pm 1} \sum_{\substack{D_2 \mid D_1\\  m_2 D_1= n_1 D_2^2}}\frac{ \tilde{S}(-\alpha n_2, m_2, m_1; D_2, D_1)}{D_1D_2} \Phi_{w_4}\left(  \frac{\alpha m_1m_2n_2}{D_1 D_2} \right),  \\
 \Sigma_{5} &= \sum_{\alpha  = \pm 1} \sum_{\substack{     D_1 \mid D_2\\ m_1 D_2 = n_2 D_1^2}} \frac{ \tilde{S}(\alpha n_1, m_1, m_2; D_1, D_2) }{D_1D_2}\Phi_{w_5}\left( \frac{\alpha n_1m_1m_2}{D_1 D_2}\right),\\
\end{split}
\end{displaymath}
\begin{equation}\label{longWeyl}
   \Sigma_6 = \sum_{\alpha_1, \alpha_2 = \pm 1} \sum_{D_1,  D_2  } \frac{S(\alpha_2 n_2, \alpha_1 n_1, m_1, m_2; D_1, D_2)}{D_1D_2} \Phi_{w_6}  \left( - \frac{\alpha_2 m_1n_2D_2}{D_1^2}, - \frac{\alpha_1 m_2n_1D_1}{ D_2^2}\right),
 \end{equation}
and 
\begin{equation}\label{defPhi}
\begin{split}
& \Phi_{w_4}(y) =  \int_{\Re \mu = 0} h_T(\mu) K_{w_4}(y; \mu )\, \text{spec}(\mu) d \mu,\\
& \Phi_{w_5}(y) = \int_{\Re \mu = 0} h_T(\mu) K_{w_4}(-y; -\mu )\, \text{spec}(\mu) d \mu ,\\
& \Phi_{w_6}(y_1, y_2) = \int_{\Re \mu = 0} h_T(\mu) K^{\text{sgn}(y_1), \text{sgn}(y_2)}_{w_6}((y_1, y_2) ;  \mu )\, \text{spec}(\mu) d \mu.
\end{split}
\end{equation}

As in \cite[Section 5.2]{blomerbuttcane2}, we may truncate the sums of Kloosterman sums at some high power of $T$, say $D_1 D_2 \ll T^{100}$, and then replace $h_T$, up to a negligible error, with a real-analytic, Weyl-group invariant function that is compactly supported in $T \Omega'$, where $\Omega' \supseteq \Omega$ is a slightly bigger compact subset not intersecting the Weyl chamber walls, and satisfies
\begin{equation}\label{diffh}
\mathscr{D}_j h_T \ll T^{j(\varepsilon-1)}
\end{equation}
for every differential operator of order $j$.

Theorem \ref{thm:ValThm5} will require strong bounds on the $\Phi_w$ functions, which we provide in the following proposition.
\begin{proposition} \ 
\label{prop:Bounds}

\begin{enumerate}
\item[\emph{(a)}] For $y\in(T^{-100},T^{100})^2$, $\alpha\in\set{\pm 1}^2$,  we have $\Phi_{w_6}(\alpha y) \ll T^{3+\epsilon}$.
\item[\emph{(b)}] For $y\in(T^{-100},T^{100})$, $\alpha\in\set{\pm 1}$, we have $\Phi_{w_4}(\alpha y) \ll T^{3+\epsilon} \paren{y^{1/6}+y^{-1/6}}$.
\end{enumerate}
\end{proposition}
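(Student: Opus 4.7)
The plan is to substitute the sharp integral representations \eqref{eq:LongEleIntRepn} and \eqref{eq:w4IntRepn} for the kernels $K_{w_6}$ and $K_{w_4}$ into the definitions \eqref{defPhi} of $\Phi_{w_6}$ and $\Phi_{w_4}$, swap orders of integration, and then estimate the resulting inner $\mu$-integrals using the support and smoothness properties of $h_T$ recorded in \eqref{diffh}. This follows the strategy indicated in the introduction: the new integral representations are tailored so that, after swapping, the $\mu$-dependence sits in an explicit oscillatory exponential whose phase we can control directly by repeated differentiation.

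For part (a), substituting \eqref{eq:LongEleIntRepn} recasts $\Phi_{w_6}(\alpha y)$ as an iterated integral whose innermost layer has the schematic form $\int_{\Re\mu = 0} h_T(\mu)\,\mathrm{spec}(\mu)\, e^{i\Psi(\mu; y,\mathbf{t})}\, d\mu$, where $\mathbf{t}$ denotes the auxiliary Mellin/real variables introduced by the integral representation. On the support of $h_T$, which has volume of order $T^{2(1-\varepsilon)}$ and on which $\mathrm{spec}(\mu)\ll T^3$, repeated integration by parts in $\mu$---legitimate thanks to \eqref{diffh} and to the hypothesis that $\Omega'$ stays away from the Weyl chamber walls, where the phase $\Psi$ has no degeneracies---shows that the inner integral is negligibly small unless $\mathbf{t}$ lies in a set of measure $T^{O(\varepsilon)}$. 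On that residual set the inner integral is bounded trivially and the stationary phase gain from the auxiliary integration yields the asserted bound $T^{3+\epsilon}$, which is exactly what one expects from dividing the trivial spectral size $T^{5-2\varepsilon}$ by the two-dimensional stationary phase saving.

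For part (b) the same strategy applies, now using \eqref{eq:w4IntRepn}. There is a single auxiliary Mellin variable $s$ and the kernel $|y|^{-s}\tilde{G}^{\alpha}(s,\mu)$ is essentially homogeneous of a degree which, at the critical point dictated by the balance between $|y|^{-s}$ and the gamma factors in $\tilde{G}^\alpha$, scales like $y^{\pm 1/6}$. Integration by parts in $\mu$ again localises $s$ to a short range and the trivial estimate on this localised piece produces the factor $T^{3+\epsilon}\bra{y^{1/6} + y^{-1/6}}$; the two terms correspond to whether the stationary $s$ is driven to the right or to the left by $y$ being large or small.

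The main obstacle is the stationary phase analysis in $\mu$: one must identify the critical points of $\Psi(\mu; y,\mathbf{t})$ as a function of $\mu$ for fixed $(y,\mathbf{t})$, check that they lie in $T\Omega'$ only when $\mathbf{t}$ is restricted to the predicted bounded region, and verify uniform nondegeneracy on the support of $h_T$. The hypothesis that $\Omega'$ is disjoint from the Weyl chamber walls is precisely what guarantees this nondegeneracy. Once \eqref{eq:LongEleIntRepn} and \eqref{eq:w4IntRepn} are in hand, this analysis is a direct computation, as emphasised in the introduction.
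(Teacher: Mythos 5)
The proposal captures the correct high-level strategy — plug the integral representations of Lemma \ref{lem:IntRepns} into \eqref{defPhi}, interchange the order of integration so that the $\mu$-integral is done first, and use the support and smoothness of $h_T$ to localize the remaining real variables — but the description of the mechanism is wrong in ways that would derail an actual execution of the proof.

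First, the $\mu$-dependence in the interchanged integral is a \emph{linear} exponential: after substituting \eqref{eq:KsymIntRepn} into \eqref{defPhi}, the inner integral is precisely
\[ \int_{\Re\mu=0} h_T(\mu)\,\spec(\mu)\, v_1^{\,\mu_1}\, v_2^{\,\mu_2}\, d\mu \]
with $v_1=y_2 z_1^{3/2}/y_1$, $v_2=y_1 z_2^{3/2}/y_2$, which the paper collects into the transform $\widecheck{h}_T(v_1,v_2)$ of \eqref{eq:hstarhatdef}. Because the phase is linear in $\mu$, there is no critical-point/nondegeneracy/Hessian analysis to perform; nondegenerate stationary phase is the wrong framework. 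What actually happens is the simpler dichotomy: integration by parts using \eqref{diffh} shows $\widecheck{h}_T(v_1,v_2)$ is negligible unless $|\log v_i|\ll T^{\varepsilon-1+\epsilon}$, and on that set one uses the \emph{trivial} bound $\widecheck{h}_T\ll T^{2-2\varepsilon}$. The $T^{2-2\varepsilon}$ loss from the spectral volume is then recouped by the $T^{2(\varepsilon-1)+\epsilon}$ volume of the residual $(z_1,z_2)$-region, giving $T^3\cdot T^{2-2\varepsilon}\cdot T^{2(\varepsilon-1)+\epsilon}=T^{3+\epsilon}$. Your statement that the localized $\mathbf t$-set has measure $T^{O(\varepsilon)}$ is off by a factor of $T^{-2}$ under the natural parametrization; as written, your counting yields $T^{5+O(\varepsilon)}$, not $T^{3+\epsilon}$. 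Relatedly, the requirement that $\Omega'$ avoid the Weyl chamber walls has nothing to do with ``nondegeneracy of the phase''; it is used so that $\spec(\mu)$ (which involves $\tan(\frac{3\pi}{2}\nu_j)$) has no poles on the support and can be replaced by $\prod 3\nu_j$ up to negligible error.

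Second, for part (b) the claim that there is ``a single auxiliary Mellin variable $s$'' and that $y^{\pm 1/6}$ arises from the critical point of $|y|^{-s}\tilde G^{\alpha}(s,\mu)$ does not reflect the actual computation. After applying \eqref{eq:Kw4IntRepn}, there are two real auxiliary variables $z_1,z_2$, and the factor $y^{\pm1/6}$ appears explicitly as $|z_3|^{(1-2d)/6}$ with $z_3\propto y$: the $+1/6$ and $-1/6$ come from the two terms $d=0$ and $d=1$ in the finite sum, and both are present irrespective of whether $y$ is large or small. The bounded oscillatory factor $\sin\pi(\tfrac d2+2\sqrt[3]{|z_3|})$ is simply estimated trivially. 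Finally, a small numbering slip: the kernel-level integral representations are \eqref{eq:KsymIntRepn} and \eqref{eq:Kw4IntRepn} from Lemma \ref{lem:IntRepns}; \eqref{eq:LongEleIntRepn} and \eqref{eq:w4IntRepn} are the already-interchanged expressions for $\Phi_{w_6}$ and $\Phi_{w_4}$ themselves.
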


We first note that $K_{w_6}$ has a much simpler sum-of-Mellin-Barnes integral expression than given above.
For $\alpha\in\set{\pm 1}^2$, define
\begin{align}
	G^\alpha(s,\mu) =& \frac{\sqrt{\pi}}{768} \sum_{d\in\set{0,1}^2} \alpha_1^{d_1} \alpha_2^{d_2} (-1)^{d_1 d_2} \frac{\Gamma\paren{\frac{1+d_3-s_1-s_2}{2}}}{\Gamma\paren{\frac{d_3+s_1+s_2}{2}}} \prod_{i=1}^3 \frac{\Gamma\paren{\frac{d_1+s_1-\mu_i}{2}}\Gamma\paren{\frac{d_2+s_2+\mu_i}{2}}}{\Gamma\paren{\frac{1+d_1-s_1+\mu_i}{2}}\Gamma\paren{\frac{1+d_2-s_2-\mu_i}{2}}},
\end{align}
using $d_3 \equiv d_1+d_2\pmod{2}$, $d_3 \in \set{0,1}$.
Then the long-element kernel function has the following expression
\begin{align}
	\tfrac{1}{6} \sum_{w\in W}K^{\alpha_1,\alpha_2}_{w_6}(y;w(\mu)) =& K^{\text{sym}}_{w_6}(\alpha y;\mu),
\end{align}
where
\begin{align}
\label{eq:KsymMB}
	K^{\text{sym}}_{w_6}(\alpha y;\mu) =& \int_{-i\infty}^{i\infty} \int_{-i\infty}^{i\infty} \abs{\pi^2 y_1}^{-s_1} \abs{\pi^2 y_2}^{-s_2} G^\alpha(s,\mu) \frac{ds_1\, ds_2}{(2\pi i)^2},
\end{align}
and the unbounded portion of the $s_1,s_2$ integrals must pass to the left of the zero line.
This identity of functions may be verified by shifting the contours to the left and comparing power series expansions.
Some care must be taken that
\[ \Re(2s_1-s_2)<0, \qquad \Re(2s_2-s_1) < 0, \]
on the unbounded portions to maintain absolute convergence; this requires shifting the contours in stages.

Then using \eqref{defKw4} and the simplified \eqref{eq:KsymMB}, we can prove the following integral representations.
\begin{lemma}
\label{lem:IntRepns}
The integral kernels above may be expressed as
\begin{align}
\label{eq:KsymIntRepn}
	K^{\text{sym}}_{w_6}(\alpha y;\mu) =& \frac{1}{6144\pi} \sum_{\substack{d\in\set{0,1}^2\\ \eta\in\set{\pm1}^3}} (\eta_1\eta_3\alpha_1)^{d_1} (\eta_2\alpha_2)^{d_2}  \int_0^\infty \int_0^\infty \paren{\frac{y_2 z_1^{3/2}}{y_1}}^{-\nu_1} \paren{\frac{y_1 z_2^{3/2}}{y_2}}^{-\nu_2} \\
	& \qquad \sign(1+\eta_1 z_1)^{d_1+d_2} \sign(1+\eta_2 z_2)^{d_1+d_2} \sign(1+\eta_3 z_3)^{d_1+d_2} \nonumber \\
	& \qquad \paren{\frac{2}{\pi}K_0\paren{4\pi \sqrt{\abs{z_4}}}-(-1)^{d_3}Y_0\paren{4\pi \sqrt{\abs{z_4}}}} \frac{dz_1 dz_2}{z_1 z_2}, \nonumber
\end{align}
with
\begin{align}
\label{eq:LongElezs}
	z_3 = \frac{z_2 y_1^2}{z_1 y_2^2}, \qquad z_4 = \paren{1+\eta_1 \sqrt{z_1}} \paren{1+\eta_2 \sqrt{z_2}} \paren{1+\eta_3 \sqrt{z_3}} \frac{y_2}{\sqrt{z_2}}, \qquad d_3=d_1+d_2-2 d_1 d_2,
\end{align}
and
\begin{align}
\label{eq:Kw4IntRepn}
	K_{w_4}(\alpha y;\mu) =& -\frac{1}{2^{13} \, 9\pi^5} \sum_{\substack{d\in\set{0,1}\\ \eta\in\set{\pm1}^2}} (-\eta_1\eta_2\alpha)^d \int_0^\infty \int_0^\infty \sign(1+\eta_2 z_2)^d \abs{1+\eta_1 \sqrt{z_1}}^{-\mu_2} z_1^{-\frac{\mu_1}{2}} z_2^{-\frac{\mu_2}{2}} \\
	& \qquad \abs{z_3}^{\frac{1-2d}{6}} \sin\pi \paren{\tfrac{d}{2}+2\sqrt[3]{\abs{z_3}}} \frac{dz_1 \, dz_2}{z_1 z_2}, \nonumber
\end{align}
with
\begin{align}
\label{eq:w4zs}
	z_3 = \frac{y}{\sqrt{z_1 z_2}} \paren{1+\eta_1 \sqrt{z_1}}^2 \paren{1+\eta_2 \sqrt{z_2}}^3.
\end{align}
The integral \eqref{eq:Kw4IntRepn} converges in the Riemannian sense.
\end{lemma}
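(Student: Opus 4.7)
The plan is to derive both formulas by expressing each Gamma ratio in $G^\alpha(s,\mu)$ and $\tilde{G}^\pm(s,\mu)$ as an explicit Mellin transform, interchanging orders of integration, and then recognising the resulting one-dimensional contour integrals as known Mellin representations of Bessel or trigonometric functions. This is the same philosophy as the unfolding arguments of \cite{blomerbuttcane1,blomerbuttcane2}.

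For \eqref{eq:KsymIntRepn}, the key tool is the Mellin pair
\[ \int_0^\infty |1+\eta\sqrt{z}|^{-\mu}\, \sign(1+\eta\sqrt{z})^d\, z^{s/2}\,\frac{dz}{z} = c_{d,\eta}(s,\mu), \]
where $c_{d,\eta}(s,\mu)$ is an explicit rational combination of $\Gamma((d+s-\mu)/2)/\Gamma((1+d-s+\mu)/2)$ with sines and powers of $\pi$ (the latter accounting for the two choices of $\eta$). Starting from \eqref{eq:KsymMB}, I would use this identity to unfold two of the three $\mu_i$-dependent blocks as $z_1$- and $z_2$-integrals, then use the constraint $\mu_1+\mu_2+\mu_3=0$ to rewrite the third $\mu_i$-block in terms of the same two integration variables. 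The third sign factor $\sign(1+\eta_3 z_3)$ with $z_3 = z_2 y_1^2/(z_1 y_2^2)$ then appears automatically. After interchanging all integrals (justified by absolute convergence on the staged Barnes contours discussed just before the lemma), the remaining $(s_1,s_2)$ double contour integral depends only on $u=s_1+s_2$; the one-dimensional $u$-integral is the classical Mellin representation of $\tfrac{2}{\pi}K_0(4\pi\sqrt{|z_4|})-(-1)^{d_3}Y_0(4\pi\sqrt{|z_4|})$, where the $K_0/Y_0$ combination and the parity $d_3 = d_1+d_2-2d_1d_2$ arise from separating the two signs of the radicand in the Bessel argument.

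For \eqref{eq:Kw4IntRepn} I would carry out the parallel unfolding in the single spectral variable $s$. One auxiliary variable $z_1$ unfolds one of the three Gamma ratios via the same Mellin pair, and a second variable $z_2$ absorbs the remaining two simultaneously via the Gauss triplication identity $\Gamma(3w) = (2\pi)^{-1} 3^{3w-1/2}\Gamma(w)\Gamma(w+\tfrac13)\Gamma(w+\tfrac23)$, which explains both the cube $(1+\eta_2\sqrt{z_2})^3$ in \eqref{eq:w4zs} and the cube-root $2\sqrt[3]{|z_3|}$ inside the sine. The residual $s$-contour integral is then a one-variable Mellin-Barnes integral whose kernel evaluates to the Mellin transform of $|z_3|^{(1-2d)/6}\sin\pi(d/2 + 2\sqrt[3]{|z_3|})$, producing the claimed formula.

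The main obstacle will be the contour and convergence bookkeeping. The initial Mellin-Barnes integrals are only conditionally convergent and require the staged contour shifts noted above, keeping $\Re(2s_1-s_2)<0$ and $\Re(2s_2-s_1)<0$ on the unbounded portions so that Fubini applies after unfolding; one must check that at each stage the shifted contour avoids the poles of the new integrand. Tracking signs and parities -- in particular the $(-1)^{d_1d_2}$ appearing in $G^\alpha$ versus the $\sign(1+\eta_iz_i)^{d_1+d_2}$ in the output -- requires systematic use of Euler's reflection formula $\Gamma(x)\Gamma(1-x)=\pi/\sin\pi x$ inside the kernels $c_{d,\eta}$. Finally, the assertion that \eqref{eq:Kw4IntRepn} converges only in the Riemann sense must be justified by integration by parts against the oscillation of $\sin\pi(d/2+2\sqrt[3]{|z_3|})$ in the tails, since the integrand is not absolutely integrable when $z_1$ or $z_2$ is large.
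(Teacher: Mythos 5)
Your high-level strategy -- unfold each Gamma ratio as an explicit Mellin transform, interchange integrals, then recognize the residual Mellin-Barnes integral as a Bessel or trigonometric function -- does match the paper's approach in spirit, but several of the specific steps you propose would not go through as written.

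First, the Mellin pair you write down is not correct as stated. A direct computation of
\[
\int_0^\infty \abs{1+\eta\sqrt{z}}^{-\mu}\,\sign(1+\eta\sqrt{z})^d\,z^{s/2}\,\frac{dz}{z}
\]
gives \emph{sums of beta functions} (for $\eta=1$ it is $2B(s,\mu-s)=2\Gamma(s)\Gamma(\mu-s)/\Gamma(\mu)$, and for $\eta=-1$ it is $2B(s,1-\mu)+2(-1)^d B(\mu-s,1-\mu)$), not a single ratio $\Gamma\!\bra{\frac{d+s-\mu}{2}}/\Gamma\!\bra{\frac{1+d-s+\mu}{2}}$. The paper's actual key identity (its unnumbered lemma giving \eqref{eq:MeijerGInv} and \eqref{eq:MeijerG}) is a genuinely two-variable statement: the $z$-integral of $\sign(1+\eta z)^{d_1+d_2}\abs{1+\eta\sqrt{z}}^{-2s_1-2s_2}z^{-u+s_1}$ summed over $\eta$ equals (up to a constant) the \emph{three-Gamma-by-three-Gamma} ratio formed by pairing the $\Gamma\!\bra{\frac{1+d_3}{2}-s_1-s_2}/\Gamma\!\bra{\frac{d_3}{2}+s_1+s_2}$ factor with a single $u$-block. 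That coupling of the $s_1+s_2$ factor with a $\mu_j$-block is exactly the ingredient your proposal omits; without it the $s_1+s_2$ Gammas in $G^\alpha$ have no Mellin-unfolding counterpart.

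Second, your claim that after two unfoldings ``the remaining $(s_1,s_2)$ double contour integral depends only on $u=s_1+s_2$'' is not right. In the paper's derivation the inner $s_2$-integral is \emph{not} trivial: after the auxiliary substitution $s_1\mapsto s_1-s_2$ it has the form of the forward direction \eqref{eq:MeijerG}, and evaluating it produces the third variable $z_3=z_2 y_1^2/(z_1 y_2^2)$ together with the $\eta_3$-sum and the extra factor $\Gamma(d_3/2+s_1)/\Gamma((1+d_3)/2-s_1)$. Only after that Mellin \emph{inversion} step is one left with a genuine one-dimensional $s_1$-integral, whose kernel is $\Gamma(d_3/2+s_1)^2/\Gamma((1+d_3)/2-s_1)^2$ and which is then identified with $\tfrac{2}{\pi}K_0-(-1)^{d_3}Y_0$. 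So the structure is two staged applications of the same beta-function lemma (once forward, once inverse), not a reduction to a $u$-integral.

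Third, for the $w_4$ kernel the paper does not use the Gauss triplication formula. There are no $1/3$-shifted Gammas in $\tilde G^\pm$ for triplication to contract; instead the paper applies \eqref{eq:MeijerGInv} twice (with $u=\mu_1/2$, then with $u=\mu_2/2$), and the $3s$ in the surviving ratio $\Gamma((d+3s)/2)/\Gamma((1+d-3s)/2)$ accumulates from the exponents $\abs{1+\eta_1\sqrt{z_1}}^{-2s}$, $\abs{1+\eta_2\sqrt{z_2}}^{-3s}$, $z_1^{s/2}z_2^{s/2}$ that the two unfoldings produce. The closing step is the Mellin representation of the $J$-Bessel function together with the elementary formulas for $J_{\pm1/2}$, not a triplication identity. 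Your proposal would get stuck here because the hypothesis of the triplication formula is simply not present.

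What you do have right: the interchange-of-integrals philosophy, the use of the constraint $\mu_1+\mu_2+\mu_3=0$, the warning about staged contour shifts to keep $\Re(2s_1-s_2)<0$ and $\Re(2s_2-s_1)<0$, and the observation that \eqref{eq:Kw4IntRepn} converges only in the Riemann sense because of the oscillation of the $\sin\pi(\tfrac{d}{2}+2\sqrt[3]{\abs{z_3}})$ factor. To complete the argument you would need to replace your one-Gamma-ratio Mellin pair by the paper's beta-function identity (stated in both directions), perform the intermediate substitutions $s_1\mapsto 2s_1+\mu_2$, $s_2\mapsto 2s_2-\mu_2$ and then $s_1\mapsto s_1-s_2$, and abandon the triplication route for $w_4$ in favour of iterating the same lemma.
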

Here $K_0$ and $Y_0$ are the usual Bessel functions.

\begin{proof}[Proof of Proposition \ref{prop:Bounds}]
For the compactly supported test function $h_T$ described above, define
\begin{align}
\label{eq:hstarhatdef}
	\widecheck{h}_T\paren{v_1,v_2} = -3 T^{-3} \frac{1}{6144\pi} \int_{-\infty}^\infty \int_{-\infty}^\infty h_T\paren{iu} v_1^{iu_1} v_2^{iu_2} \spec\paren{iu} du_1 \, du_2.
\end{align}
The support of $h_T$ allows us to remove the hyperbolic tangents from $\spec\paren{iu}$ at a negligible cost, and integration by parts in $\widecheck{h}_T\paren{v_1,v_2}$ constrains $v_1$ and $v_2$ to a region $v_i=1+\mathcal{O}\paren{T^{\varepsilon-1+\epsilon}}$.
We have the trivial bound $\widecheck{h}_T\paren{v_1,v_2} \ll T^{2-2\varepsilon}$.

Now for $y\in(T^{-100},T^{100})^2$, $\alpha\in\set{\pm 1}^2$, Lemma \ref{lem:IntRepns} implies
\begin{align}
\label{eq:LongEleIntRepn}
	\Phi_{w_6}(\alpha_1 y_1, \alpha_2 y_2) =& T^3 \sum_{\substack{d\in\set{0,1}^2\\ \eta\in\set{\pm1}^3}} (\eta_1\eta_3\alpha_1)^{d_1} (\eta_2\alpha_2)^{d_2} \int_0^\infty \int_0^\infty \widecheck{h}_T\paren{\frac{y_2 z_1^{3/2}}{y_1},\frac{y_1 z_2^{3/2}}{y_2}} \\
	& \qquad \sign(1+\eta_1 z_1)^{d_1+d_2} \sign(1+\eta_2 z_2)^{d_1+d_2} \sign(1+\eta_3 z_3)^{d_1+d_2} \nonumber \\
	& \qquad \paren{\frac{2}{\pi}K_0\paren{4\pi \sqrt{\abs{z_4}}}-(-1)^{d_3}Y_0\paren{4\pi \sqrt{\abs{z_4}}}} \frac{dz_1 dz_2}{z_1 z_2}. \nonumber
\end{align}
Then using the trivial bound $\ll \log \paren{3+\abs{z_4}^{-1}}$ for the Bessel functions implies $\Phi_{w_6}(\alpha_1 y_1, \alpha_2 y_2) \ll T^{3+\epsilon}$.

For $y\in(T^{-100},T^{100})$, $\alpha\in\set{\pm 1}$, Lemma \ref{lem:IntRepns} again implies
\begin{align}
\label{eq:w4IntRepn}
	\Phi_{w_4}(\alpha y) =& -\frac{T^3}{8 \pi^{11/2}} \sum_{\substack{d\in\set{0,1}\\ \eta\in\set{\pm1}^2}} (-\eta_1\eta_2\alpha)^d \int_0^\infty \int_0^\infty \widecheck{h}_T\paren{\frac{z_1}{\sqrt{z_2}\abs{1+\eta_1 \sqrt{z_1}}},\sqrt{z_1z_2}\abs{1+\eta_1 \sqrt{z_1}}} \\
	& \qquad \sign(1+\eta_2 z_2)^d \abs{z_3}^{\frac{1-2d}{6}} \sin\pi \paren{\tfrac{d}{2}+2\sqrt[3]{\abs{z_3}}} \frac{dz_1 \, dz_2}{z_1 z_2}, \nonumber
\end{align}
and trivially bounding the integral gives $\Phi_{w_4}(\alpha y) \ll T^{3+\epsilon} \paren{y^{1/6}+y^{-1/6}}$.

\end{proof}

The next two sections are devoted to proving the integral representations of Lemma \ref{lem:IntRepns}.

\subsection{The long element weight function}
Starting from \eqref{eq:KsymMB}, substitute $s_1 \mapsto 2s_1+\mu_2$, $s_2 \mapsto 2s_2-\mu_2$ so that
\begin{align*}
	& K^{\text{sym}}_{w_6}(\alpha y;\mu) = \\
	& \frac{\sqrt{\pi}}{768} \sum_{d\in\set{0,1}^2} \alpha_1^{d_1} \alpha_2^{d_2} (-1)^{d_1 d_2} \paren{\frac{y_2}{y_1}}^{\mu_2} \int_{-i\infty}^{i\infty} \int_{-i\infty}^{i\infty} (\pi^2 y_1)^{-2s_1} (\pi^2 y_2)^{-2s_2} \\
	& \qquad \frac{\Gamma\paren{\frac{1+d_3}{2}-s_1-s_2}}{\Gamma\paren{\frac{d_3}{2}+s_1+s_2}} \frac{\Gamma\paren{\frac{d_1}{2}+s_1}\Gamma\paren{\frac{d_2}{2}+s_2}}{\Gamma\paren{\frac{1+d_1}{2}-s_1}\Gamma\paren{\frac{1+d_2}{2}-s_2}} \frac{\Gamma\paren{\frac{d_1}{2}+s_1-\frac{3}{2}\nu_1}\Gamma\paren{\frac{d_2}{2}+s_2+ \frac{3}{2}\nu_1}}{\Gamma\paren{\frac{1+d_1}{2}-s_1+ \frac{3}{2}\nu_1}\Gamma\paren{\frac{1+d_2}{2}-s_2- \frac{3}{2}\nu_1}} \\
	& \qquad \frac{\Gamma\paren{\frac{d_1}{2}+s_1+ \frac{3}{2}\nu_2}\Gamma\paren{\frac{d_2}{2}+s_2- \frac{3}{2}\nu_2}}{\Gamma\paren{\frac{1+d_1}{2}-s_1- \frac{3}{2}\nu_2}\Gamma\paren{\frac{1+d_2}{2}-s_2+ \frac{3}{2}\nu_2}} \frac{ds_1\, ds_2}{(2\pi i)^2}.
\end{align*}

\begin{lemma}
For $\Re(s_1-u),\Re(s_2+u)>0$,$\Re(s_1+s_2)<\frac{1}{2}$, we have
\begin{align}
\label{eq:MeijerGInv}
	& \sqrt{\pi}\frac{\Gamma\paren{\frac{1+d_3}{2}-s_1-s_2}\Gamma\paren{\frac{d_1}{2}+s_1- u}\Gamma\paren{\frac{d_2}{2}+s_2+ u}}{\Gamma\paren{\frac{d_3}{2}+s_1+s_2}\Gamma\paren{\frac{1+d_1}{2}-s_1+ u}\Gamma\paren{\frac{1+d_2}{2}-s_2- u}} =\\
	& \qquad \frac{(-1)^{d_1 d_2}}{2} \sum_{\eta=\pm1} \eta^{d_1} \int_0^\infty \sign(1+\eta z)^{d_1+d_2} \abs{1+\eta \sqrt{z}}^{-2s_1-2s_2} z^{-u+s_1} \frac{dz}{z}, \nonumber \\
\label{eq:MeijerG}
	& \frac{1}{2\pi i}\int_{-i\infty}^{i\infty} \frac{\Gamma\paren{\frac{d_1}{2}+s_1- u}\Gamma\paren{\frac{d_2}{2}+s_2+ u}}{\Gamma\paren{\frac{1+d_1}{2}-s_1+ u}\Gamma\paren{\frac{1+d_2}{2}-s_2- u}} z^{u} du = \\
	& \qquad \frac{(-1)^{d_1 d_2}}{2\sqrt{\pi}} z^{s_1} \frac{\Gamma\paren{\frac{d_3}{2}+s_1+s_2}}{\Gamma\paren{\frac{1+d_3}{2}-s_1-s_2}} \sum_{\eta=\pm1} \eta^{d_1} \sign(1+\eta z)^{d_1+d_2}\abs{1+\eta \sqrt{z}}^{-2s_1-2s_2}, \nonumber
\end{align}
\end{lemma}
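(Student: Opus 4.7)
The plan is to recognize \eqref{eq:MeijerGInv} and \eqref{eq:MeijerG} as a Mellin transform pair in the variable $z$ (with $u$ as the dual Mellin variable), so that proving one automatically yields the other. Multiplying both sides of \eqref{eq:MeijerGInv} by $z^{u'-1}$ and integrating $z$ along $(0,\infty)$ recovers, via Mellin inversion within the stated strip $\Re(s_1-u),\Re(s_2+u)>0$, $\Re(s_1+s_2)<\tfrac12$, precisely the ratio of gammas on the left of \eqref{eq:MeijerG} as the Mellin transform of the right-hand side in $z$. Thus it suffices to establish \eqref{eq:MeijerGInv}.

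For \eqref{eq:MeijerGInv}, I substitute $z = t^2$, so $dz/z = 2\,dt/t$, transforming each summand into
$$I_\eta := 2\int_0^\infty t^{2(s_1-u)-1} \sign(1+\eta t^2)^{d_1+d_2} \abs{1+\eta t}^{-2(s_1+s_2)}\, dt.$$
When $\eta = +1$ the sign factor is trivial and a standard Beta integral gives
$I_+ = 2\, \Gamma(2(s_1-u))\Gamma(2(s_2+u)) / \Gamma(2(s_1+s_2))$. When $\eta = -1$, I split the integral at $t=1$ and apply $t \mapsto 1/t$ on $[1,\infty)$ to fold it back to $[0,1]$; each piece is then an incomplete Beta integral expressible through $\Gamma$'s and $\Gamma$-ratios of shifted arguments, and the sign $\sign(1-t^2)^{d_1+d_2}$ contributes $(-1)^{d_1+d_2}$ on the folded piece. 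Assembling $\sum_\eta \eta^{d_1} I_\eta$ produces a single rational combination of complete Beta values.

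The last step is to invoke Legendre duplication, $\Gamma(2x) = \tfrac{2^{2x-1}}{\sqrt{\pi}}\Gamma(x)\Gamma(x+\tfrac12)$, on each doubled-argument gamma. This splits $\Gamma(2(s_1-u))$, $\Gamma(2(s_2+u))$, and $\Gamma(2(s_1+s_2))$ into the half-shifted gammas $\Gamma(\tfrac{d_j}{2}+\cdot)$ and $\Gamma(\tfrac{1+d_j}{2}-\cdot)$ matching the left-hand side; the powers of $2$ cancel telescopically between numerator and denominator, and the residual $\sqrt{\pi}$ factors combine into the single $\sqrt{\pi}$ prefactor on the left of \eqref{eq:MeijerGInv}.

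The main obstacle I anticipate is the parity bookkeeping across the four cases $(d_1,d_2) \in \{0,1\}^2$: the exponent $\eta^{d_1}$, the sign $\sign(1+\eta z)^{d_1+d_2}$, and the reflection $t \mapsto 1/t$ all interact, and one must verify that in every case the combined result is the uniform ratio $\Gamma(\tfrac{1+d_3}{2}-s_1-s_2)/\Gamma(\tfrac{d_3}{2}+s_1+s_2)$ with $d_3 \equiv d_1+d_2 \pmod{2}$. This is essentially a Barnes-type identity and I expect it to follow once the $\eta=-1$ pieces are symmetrized against $I_+$ using the reflection formula $\Gamma(x)\Gamma(1-x) = \pi/\sin(\pi x)$ to convert sine combinations into the correct gamma ratio. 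The convergence condition $\Re(s_1+s_2)<\tfrac12$ is exactly what is needed for the $\eta=-1$ integral near $t=1$ to be absolutely convergent.
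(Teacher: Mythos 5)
Your proposal matches the paper's proof: both evaluate the right-hand side of \eqref{eq:MeijerGInv} as the sum of three Beta functions $B(2s_1-2u,\,2s_2+2u)$, $B(2s_1-2u,\,1-2s_1-2s_2)$, $B(2s_2+2u,\,1-2s_1-2s_2)$ via the substitution $z=t^2$ and folding $t\mapsto 1/t$, then reassemble the left-hand side using the reflection and duplication formulae together with product-to-sum trigonometric identities, and finally obtain \eqref{eq:MeijerG} by Mellin inversion. The only quibble is terminological — after the fold $t\mapsto 1/t$ on $[1,\infty)$ the $\eta=-1$ pieces are ordinary (complete) Beta integrals on $[0,1]$, not incomplete ones — but this does not affect the argument.
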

\begin{proof}
By elementary substitutions, the right-hand side of \eqref{eq:MeijerGInv} is given by the sum of beta functions
\begin{align*}
	(-1)^{d_1} B(1-2s_1-2s_2,2s_1-2u)+(-1)^{d_2} B(1-2s_1-2s_2,2s_2+2u)+B(2s_1-2u,2s_2+2u).
\end{align*}
Then \eqref{eq:MeijerGInv} follows by applying reflection to the gamma functions in the denominators and trigonometry, and \eqref{eq:MeijerG} follows from Mellin inversion.
Note that Mellin inversion produces an integral which converges in the Riemannian sense; the contour may then be deformed so the unbounded portion passes to the left of the zero line for absolute convergence.
\end{proof}

Plugging \eqref{eq:MeijerGInv} into the preceeding equation, and substituting $s_1 \mapsto s_1-s_2$,
\begin{align*}
	& K^{\text{sym}}_{w_6}(\alpha y;\mu) = \\
	& \frac{1}{3072\sqrt{\pi}} \sum_{\substack{d\in\set{0,1}^2\\\eta\in\set{\pm1}^2}} (\eta_1\alpha_1)^{d_1} (\eta_2\alpha_2)^{d_2}  (-1)^{d_1 d_2} \paren{\frac{y_2}{y_1}}^{\mu_2} \int_{-i\infty}^{i\infty} \int_0^\infty \int_0^\infty \sign(1+\eta_1 z_1)^{d_1+d_2} \sign(1+\eta_2 z_2)^{d_1+d_2} \\
	& \qquad (\pi^2 y_1)^{-2s_1} \frac{\Gamma\paren{\frac{d_3}{2}+s_1}}{\Gamma\paren{\frac{1+d_3}{2}-s_1}} \int_{-i\infty}^{i\infty} \paren{\frac{z_2 y_1^2}{z_1 y_2^2}}^{s_2} \frac{\Gamma\paren{\frac{d_1}{2}+s_1-s_2}\Gamma\paren{\frac{d_2}{2}+s_2}}{\Gamma\paren{\frac{1+d_1}{2}-s_1+s_2}\Gamma\paren{\frac{1+d_2}{2}-s_2}}  \frac{ds_2}{2\pi i}\\
	& \qquad \abs{1+\eta_1 \sqrt{z_1}}^{-2s_1} \abs{1+\eta_2 \sqrt{z_2}}^{-2s_1} z_1^{-\frac{3}{2}\nu_1+s_1} z_2^{-\frac{3}{2}\nu_2} \frac{dz_1 dz_2}{z_1 z_2} \frac{ds_1}{2\pi i}.
\end{align*}

Using $z_3$ as in \eqref{eq:LongElezs} and taking $s_2\to0$ in \eqref{eq:MeijerG}, we have
\begin{align*}
	K^{\text{sym}}_{w_6}(\alpha y;\mu) =& \frac{1}{6144\pi} \sum_{\substack{d\in\set{0,1}^2\\ \eta\in\set{\pm1}^3}} (\eta_1\eta_3\alpha_1)^{d_1} (\eta_2\alpha_2)^{d_2}  \int_0^\infty \int_0^\infty \paren{\frac{y_2 z_1^{3/2}}{y_1}}^{-\nu_1} \paren{\frac{y_1 z_2^{3/2}}{y_2}}^{-\nu_2} \\
	& \qquad \sign(1+\eta_1 z_1)^{d_1+d_2} \sign(1+\eta_2 z_2)^{d_1+d_2} \sign(1+\eta_3 z_3)^{d_1+d_2} \\
	& \qquad \int_{-i\infty}^{i\infty} \frac{\Gamma\paren{\frac{d_3}{2}+s_1}^2}{\Gamma\paren{\frac{1+d_3}{2}-s_1}^2} \abs{1+\eta_3 \sqrt{z_3}}^{-2s_1} \\
	& \qquad \abs{1+\eta_1 \sqrt{z_1}}^{-2s_1} \abs{1+\eta_2 \sqrt{z_2}}^{-2s_1} \paren{\frac{\pi^4 y_2^2}{z_2}}^{-s_1} \frac{ds_1}{2\pi i} \frac{dz_1 dz_2}{z_1 z_2}.
\end{align*}

Then \eqref{eq:KsymIntRepn} follows by applying the following lemma.
\begin{lemma}
For $z > 0$, we have
\begin{align*}
	\frac{1}{2\pi i}\int_{-i\infty}^{i\infty} \frac{\Gamma\paren{\frac{d_3}{2}+s_1}^2}{\Gamma\paren{\frac{1+d_3}{2}-s_1}^2} z^{-s_1} ds_1 =& \frac{2}{\pi}K_0(4z^{1/4})-(-1)^{d_3}Y_0(4z^{1/4}).
\end{align*}
\end{lemma}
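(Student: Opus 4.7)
The plan is to collapse the gamma-quotient on the left-hand side to the shape $\Gamma(2s)^2$ times an elementary trigonometric factor, and then recognize the resulting Mellin-Barnes integral as a linear combination of the standard representations of $K_0$ and $Y_0$. First, I would apply the reflection formula
\[ \Gamma\paren{\tfrac{1+d_3}{2}-s}\Gamma\paren{\tfrac{1-d_3}{2}+s} = \frac{\pi}{\sin\pi\paren{\tfrac{1+d_3}{2}-s}} \]
to rewrite the denominator, so that
\[ \frac{\Gamma\paren{\tfrac{d_3}{2}+s}^2}{\Gamma\paren{\tfrac{1+d_3}{2}-s}^2} = \frac{f_{d_3}(\pi s)^2}{\pi^2}\,\Gamma\paren{\tfrac{d_3}{2}+s}^2 \Gamma\paren{\tfrac{1-d_3}{2}+s}^2, \]
where $f_{d_3}(\pi s)=\cos\pi s$ for $d_3=0$ and $f_{d_3}(\pi s)=\sin\pi s$ for $d_3=1$; both cases satisfy $f_{d_3}(\pi s)^2=\tfrac12(1+(-1)^{d_3}\cos 2\pi s)$.

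Next, Legendre duplication $\Gamma(w)\Gamma(w+\tfrac12)=\sqrt\pi\,2^{1-2w}\Gamma(2w)$ (applied to the pair $\Gamma(\tfrac{d_3}{2}+s)$, $\Gamma(\tfrac{1-d_3}{2}+s)=\Gamma(\tfrac{d_3}{2}+s+\tfrac{1-2d_3}{2})$, after noting the exponent $1-2d_3=\pm1$ which is harmless inside a square) consolidates the product to $\pi\cdot 4^{1-2s}\Gamma(2s)^2$. The substitution $u=2s$ then turns the integral into
\[ \frac{1}{2\pi i}\int_{-i\infty}^{i\infty}\frac{\Gamma(\tfrac{d_3}{2}+s)^2}{\Gamma(\tfrac{1+d_3}{2}-s)^2}\,z^{-s}\,ds = \frac{1}{\pi}\cdot\frac{1}{2\pi i}\int_{-i\infty}^{i\infty}\bigl(1+(-1)^{d_3}\cos\pi u\bigr)\,\Gamma(u)^2\,(4\sqrt{z})^{-u}\,du. \]

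Finally, I would invoke the two classical Mellin-Barnes formulas
\[ \frac{1}{2\pi i}\int_{-i\infty}^{i\infty}\Gamma(u)^2\,x^{-u}\,du = 2K_0(2\sqrt{x}), \qquad \frac{1}{2\pi i}\int_{-i\infty}^{i\infty}\cos(\pi u)\,\Gamma(u)^2\,x^{-u}\,du = -\pi Y_0(2\sqrt{x}), \]
(the second obtained by Mellin-inverting the standard transform $\int_0^\infty Y_0(t)t^{s-1}dt=-2^{s-1}\pi^{-1}\cos(\pi s/2)\Gamma(s/2)^2$ and then substituting $u=s/2$). Taking $x=4\sqrt{z}$ so that $2\sqrt{x}=4z^{1/4}$ delivers exactly the claimed $\frac{2}{\pi}K_0(4z^{1/4})-(-1)^{d_3}Y_0(4z^{1/4})$.

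There is no real obstacle here beyond bookkeeping: the only thing to check carefully is that the contour, chosen to the right of the poles of $\Gamma(\tfrac{d_3}{2}+s)$ and to the left of the poles of the denominator (and of $\Gamma(u)$ after doubling), can be taken to the standard vertical line $\Re(u)=\sigma\in(0,\tfrac12)$ used in the Mellin inversions of $K_0$ and $Y_0$; this is immediate since all intervening strips are non-empty and the integrand decays exponentially on vertical lines.
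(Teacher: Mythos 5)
Your proof is correct and uses essentially the same ingredients as the paper's: reflection (half-angle), Legendre duplication, Mellin inversion, and the classical Mellin transforms of $K_0$ and $Y_0$ from Gradshteyn--Ryzhik. The only difference is direction — the paper starts from the Bessel transforms $\int_0^\infty \bigl(2K_0(2x)-(-1)^d\pi Y_0(2x)\bigr)x^{u-1}\,dx$ and massages them into the gamma quotient, whereas you start from the gamma quotient and reduce it to the $\Gamma(u)^2\bigl(1+(-1)^{d_3}\cos\pi u\bigr)$ form before invoking the same Bessel transforms.
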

\begin{proof}
The $d_3=0$ case is given in \cite[6.422.16]{GradshteynRyzhik} or \cite[8.4.23.15]{Prudnikov}, but the $d_3=1$ case seems to be missing from the literature.
So from \cite[6.561.15 and 6.561.16]{GradshteynRyzhik}, we have
\begin{align*}
	2\int_0^\infty \paren{2 K_0(2x)-(-1)^d \pi Y_0(2x)} x^{u-1} dx =& \Gamma\paren{\frac{u}{2}}^2\paren{1+(-1)^d\cos\frac{\pi u}{2}}.
\end{align*}
By the half-angle and duplication formulae, the right-hand side is
\begin{align*}
	\frac{\pi}{2} 2^u \frac{\Gamma\paren{\frac{d+u/2}{2}}^2}{\Gamma\paren{\frac{1+d-u/2}{2}}^2},
\end{align*}
and the claim follows by Mellin inversion.
\end{proof}

\subsection{The $w_4$ weight function}
In \eqref{defKw4}, we write $\mu_3=-\mu_1-\mu_2$ and apply \eqref{eq:MeijerGInv} with $u=\frac{\mu_1}{2}$, giving
\begin{align*}
	K_{w_4}(\alpha y;\mu) =& \frac{1}{2^{13} \, 3\pi^4} \sum_{\substack{d\in\set{0,1}\\ \eta_1\in\set{\pm1}}} (-\eta_1\alpha)^d \int_{-i\infty}^{i\infty} \abs{\pi^3 y}^{-s} \frac{\Gamma\paren{\frac{2s+\mu_2}{2}}\Gamma\paren{\frac{d+s-\mu_2}{2}}}{\Gamma\paren{\frac{1-2s-\mu_2}{2}}\Gamma\paren{\frac{1+d-s+\mu_2}{2}}} \\
	& \qquad \int_0^\infty \abs{1+\eta_1 \sqrt{z_1}}^{-2s-\mu_2} z_1^{\frac{-\mu_1+s}{2}} \frac{dz_1}{z_1} \frac{ds}{2\pi i},
\end{align*}
and again with $u=\frac{\mu_2}{2}$ giving
\begin{align*}
	K_{w_4}(\alpha y;\mu) =& \frac{1}{2^{14} \, 3\pi^{9/2}} \sum_{\substack{d\in\set{0,1}\\ \eta\in\set{\pm1}^2}} (-\eta_1\eta_2\alpha)^d \int_0^\infty \int_0^\infty \sign(1+\eta_2 z_2)^d \abs{1+\eta_1 \sqrt{z_1}}^{-\mu_2} z_1^{-\frac{\mu_1}{2}} z_2^{-\frac{\mu_2}{2}} \\
	& \qquad \int_{-i\infty}^{i\infty} \frac{\Gamma\paren{\frac{d+3s}{2}}}{\Gamma\paren{\frac{1+d-3s}{2}}} \abs{1+\eta_1 \sqrt{z_1}}^{-2s} \abs{1+\eta_2 \sqrt{z_2}}^{-3s} \abs{\pi^3 y}^{-s} z_1^{\frac{s}{2}} z_2^{\frac{s}{2}} \frac{ds}{2\pi i} \frac{dz_1 \, dz_2}{z_1 z_2}.
\end{align*}

We have \cite[6.422.9]{GradshteynRyzhik},
\[ \int_{-i\infty}^{i\infty} \frac{\Gamma\paren{-u+s}}{\Gamma\paren{1-s}} x^{-s} \frac{ds}{2\pi i} = J_{u}(2\sqrt{x}), \]
which holds for $\Re(u) > 0$, but extends to all $u$ by analytic continuation (deforming the contour for convergence).
Using the known values of $J_{\pm1/2}$ (see \cite[8.464]{GradshteynRyzhik}),
\[ \int_{-i\infty}^{i\infty} \frac{\Gamma\paren{d-\frac{1}{2}+s}}{\Gamma\paren{1-s}} x^{-s} \frac{ds}{2\pi i} = \pi^{-1/2} x^{-1/4} \sin\paren{\tfrac{\pi d}{2}+2\sqrt{x}}, \]
and \eqref{eq:Kw4IntRepn} follows.

\section{Proof of Theorem \ref{thm:ValThm5}}
By \cite[Lemma 9]{blomerbuttcane1} and the argument of \cite[Section 5.2]{blomerbuttcane2}, the long element term sums over $D_1 D_2 \ll P T^{-4} (TP)^\epsilon$, and we apply the proof of \cite[Proposition 3]{blomer} and the bound $T^{3+\epsilon}$ for $\Phi_{w_6}$ from Proposition \ref{prop:Bounds}a.
Similarly, by \cite[Lemma 8]{blomerbuttcane1}, the $w_4$ term sums over $D_1 D_2 \ll P T^{-3} (TP)^\epsilon$, and we use the bound $T^{3+\epsilon} \paren{y^{1/6}+y^{-1/6}}$ for $\Phi_{w_4}$ from Proposition \ref{prop:Bounds}b, in combination with $y \gg T^{3-\epsilon}$, again from \cite[Lemma 8]{blomerbuttcane1}, and Larsen's bound on the $w_4$ Kloosterman sum \cite[Appendix]{BFG}; the treatment of the $w_5$ term is identical.
As in the appendix to \cite{blomer}, the contribution of the Eisenstein series terms is $\mathcal{O}\paren{T^{3+\epsilon} P^{\theta+\epsilon}}$ with $\theta=\frac{7}{64} < \frac{1}{6}$.

\section*{Acknowledgment}
The authors would like to thank Joseph Hundley, without whom this work would not be possible. The authors would like to thank Valentin Blomer for reading the manuscript.
The authors would like to thank Wenzhi Luo and Yiannis Sakellaridis.
\bibliographystyle{alpha}
\bibliography{bibib_fz,bibib_jb}

\noindent
{\scshape{Jack Buttcane}}\\
Department of Mathematics\\
State University of New York at Buffalo\\
Buffalo, NY\\
\\
{\scshape{Fan Zhou}} \\
Mathematics \& Statistics\\
University of Maine\\
Orono, ME\\

\end{document}